\newcommand{\C}{\mathbb{C} }
\newcommand{\Z}{\mathbb{Z} }
\newcommand{\T}{\mathbb{T} }
\newcommand{\N}{\mathbb{N} }
\newcommand{\1}{{\bf 1}}
\newcommand{\vof}[1]{{\boldsymbol #1}}
\newcommand{\myset}{{\mathcal{X}}}
\newcommand{\avg}{{\mathcal{A}}}
\newcommand{\unavg}{\mathcal{S}}
\newcommand{\numreps}{{\mathcal{N}}} % This will be the number of representations for a system of Diophantine equations. It exists because we are already using $N$ to record the truncation. 
\newcommand{\maxnumreps}{\mathcal{M}} % This will be the maximum number of 
\newcommand{\IP}[2]{\langle #1,#2 \rangle}
\newcommand{\numpolys}{r}
\renewcommand{\and}{{\; \text{and} \;}}
\renewcommand{\for}{{\; \text{for} \;}}
\newcommand{\all}{{\; \text{all} \;}}
\DeclareMathOperator{\specialsols}{spe} % Special solutions
\DeclareMathOperator{\genericsols}{gen} % Generic solutions
\newtheorem{theorem}{Theorem}[section]
\newtheorem{lemma}[theorem]{Lemma}
\newtheorem{proposition}[theorem]{Proposition}
\newtheorem{conjecture}[theorem]{Conjecture}
\theoremstyle{definition}
\theoremstyle{remark}
\newtheorem{remark}[theorem]{Remark}
\numberwithin{equation}{section}
\title{\vspace*{-1.5cm} Subcritical paucity and $\ell^p$-improving estimates for finite-type polynomial curves}
\author{\vspace{-0.5cm}Kevin Hughes}
\address{
    School of Mathematics
	\\	The University of Bristol
% 	\\	Bristol, BS8 1UG
%	\\	UK
	\\ and the Heilbronn Insitute for Mathematical Research, Bristol, UK
}
\email{khughes.math@gmail.com}
\begin{document}

\begin{abstract}
I prove new subcritical bounds for the $\ell^p$-improving problem along restricted subsets of a degenerate curve. The key input is a new paucity estimate for associated inhomogeneous equations which is proven using an elimination method due to Wooley and Parsell--Wooley. 
\end{abstract}

\maketitle

% \warning{To do: }
% \begin{itemize}
% \item 
% \item 
% \end{itemize}

% % % 
\section{Introduction}
% % % 

In this work I consider averages along a finite sequence $\myset \subseteq \N$ embedded on a polynomial curve $\gamma(T) : \Z \to \Z^\numpolys$ defined by $\numpolys$ univariate polynomials $\gamma(T) := (\phi_1(T),\phi_2(T),\dots,\phi_\numpolys(T))$. Each polynomial has integer coefficients. Assume that their degrees $\deg(\phi_j)$ for $j=1,\dots,\numpolys$ are separated in the sense that $1 \leq \deg(\phi_1) < \deg(\phi_2) < \cdots < \deg(\phi_\numpolys)$. I call such a collection of polynomials a \emph{separated system of $\numpolys$ polynomials}. The moment curve $(X,X^2,\dots,X^\numpolys)$ is a good example as well as curves of the form $(X^j,X^{j+1},\dots,X^{j+\numpolys-1})$. 
The total degree of a curve $\gamma$ is $D_\gamma := \deg{\phi_1},+\cdots+\deg{\phi_{\numpolys}}$. 

Define the \emph{truncated, forward averages along the sequence $\myset$ embedded on the curve $\gamma$} as 
\[
\avg^{\gamma}_{\myset}f(\vof{x}) 
:= 
|\myset|^{-1}\sum_{n \in \myset} f(x_1+\phi_1(n),\dots,x_k+\phi_{\numpolys}(n))
\]
for points $\vof{x} \in \Z^\numpolys$ and functions $f : \Z^\numpolys \to \C$. 
Here and throughout, I systematically use $|\myset|$ to denote the cardinality of a finite set $\myset$. I also use $|\cdot|$ to denote the Euclidean norm of a vector. It will be apparent from context which use I mean. 

Based on the standard examples - the delta function, the characteristic function of the curve $\gamma$ and the characteristic function of a large box - I conjecture the following diameter-free $\ell^p$-improving estimates. 
\begin{conjecture}\label{conjecture:strong}
Let $\gamma(T) = \big( \phi_1(T),\dots,\phi_{\numpolys}(T) \big) \subset \Z[T]$ be a separated system of $\numpolys$ polynomials with integer coefficients and total degree $D_{\gamma}$. 
If $1 \leq p \leq 2 \leq q$, then there exists a constant $C_{p,q}$ such that for all finite subsets $\myset \subset \N$, we have the inequality 
\begin{equation}\label{estimate:strong_conjecture}
\| \avg^{\gamma}_{\myset} \|_{\ell^p(\Z) \to \ell^{q}(\Z^\numpolys)} 
\leq C_{p,q} 
\left( |\myset|^{-D_{\gamma}(\frac{1}{p}-\frac{1}{q})} + |\myset|^{\frac{1}{q}-1} + |\myset|^{-\frac{1}{p}} \right) 
.\end{equation}
\end{conjecture}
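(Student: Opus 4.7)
The three terms on the right-hand side of \eqref{estimate:strong_conjecture} correspond to three extremal regimes in the Riesz diagram, and the natural plan is to prove each extremal bound and then apply Riesz--Thorin interpolation. Set $\sigma := 1/p - 1/q$. The trivial estimates give $\|\avg^\gamma_\myset\|_{\ell^1 \to \ell^1} = \|\avg^\gamma_\myset\|_{\ell^\infty \to \ell^\infty} = 1$, and the explicit form $\avg^\gamma_\myset \delta_{\vof 0}(\vof x) = |\myset|^{-1} \1_{-\gamma(\myset)}(\vof x)$ gives $\|\avg^\gamma_\myset\|_{\ell^1 \to \ell^\infty} \leq |\myset|^{-1}$. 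Interpolating the latter with the two isometric bounds yields $\|\avg^\gamma_\myset\|_{\ell^p \to \ell^\infty} \leq |\myset|^{-1/p}$ and, by duality, $\|\avg^\gamma_\myset\|_{\ell^1 \to \ell^q} \leq |\myset|^{1/q-1}$ (using injectivity of $\gamma$ on $\myset$). Interpolating each of these further with the norm-one endpoints already accounts for the second and third terms in \eqref{estimate:strong_conjecture}.

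The heart of the matter is the critical estimate $\|\avg^\gamma_\myset\|_{\ell^{p_0} \to \ell^{q_0}} \lesssim |\myset|^{-D_\gamma \sigma_0}$ at a corner of the Riesz diagram, which I would anchor at $p_0 = 2$ (or the dual $q_0 = 2$). I would proceed as follows. First, expand a suitable even power $\|\avg^\gamma_\myset f\|_{\ell^{q_0}}^{q_0}$ and, via a $TT^*$-type manipulation, reduce the estimate to controlling the number of integer solutions of the inhomogeneous system
\[
\sum_{i=1}^{s} \phi_j(n_i) - \sum_{i=1}^{s} \phi_j(m_i) = h_j \qquad (j = 1, \dots, \numpolys),
\]
with $n_i, m_i \in \myset$, for an integer parameter $s$ tied to $q_0$. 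Next, invoke the elimination method of Wooley and Parsell--Wooley to establish the paucity estimate: peel off the equations in order of ascending degree, exploiting the separation $\deg \phi_1 < \cdots < \deg \phi_\numpolys$ to convert each equation into a polynomial constraint of strictly lower degree on the remaining variables, and show that the count of non-diagonal solutions is a lower-order term compared with the diagonal contribution of size $s!\,|\myset|^s$. Combining the two contributions yields the critical bound, and a second round of interpolation against the trivial bounds above extends it to the full range $1 \leq p \leq 2 \leq q$, producing \eqref{estimate:strong_conjecture}.

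The principal obstacle is this last paucity step: to reach the true corner of the Riesz diagram, the paucity estimate must be sharp, not merely subcritical. For the moment curve this is essentially Vinogradov's mean value theorem, now resolved by Wooley and by Bourgain--Demeter--Guth. For a general separated system $\gamma$, however, the elimination method appears to give paucity of the required form only in a subcritical range of $s$; pushing it to the critical $s$ seems to require genuinely new input, which is presumably why the present paper establishes subcritical versions of \eqref{estimate:strong_conjecture} rather than the full conjecture.
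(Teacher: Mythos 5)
The statement you are trying to prove is Conjecture~\ref{conjecture:strong}, which the paper itself leaves open (it only proves the subcritical, diameter-dependent cases recorded in Theorem~\ref{theorem:improving}), so your proposal has to stand on its own --- and it does not close. The step you defer to ``the elimination method of Wooley and Parsell--Wooley'' is not a technical lemma to be filled in later: it is essentially Conjecture~\ref{conjecture:paucity} at the critical number of variables $t$ close to $D_\gamma$, together with uniformity in the shift $\vof{a}\neq\vof{0}$ and in the sparse set $\myset$. The elimination method, as implemented in Section~\ref{section:paucity}, only reaches $t=\numpolys$, which for a separated system is in general far below $D_\gamma-1$; pushing it to the critical range is precisely the open content of the conjecture, so what you have written is a reduction, not a proof. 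Your appeal to Vinogradov's mean value theorem for the moment curve does not repair this: VMVT/decoupling concerns the homogeneous system over the full interval $[N]$ with $N^{\epsilon}$ losses, whereas what is needed here is a paucity bound for the inhomogeneous maximal count $\maxnumreps^{\gamma}_{t}(\myset,[N])$ over arbitrary subsets $\myset$, with constants independent of the diameter $N$. For the same reason, even if you granted yourself a sharp paucity estimate of the type the elimination/divisor-bound machinery produces (losses of size $\mathcal{L}(c,N)\leq C_\epsilon N^{\epsilon}$), you would at best obtain the weak form, Conjecture~\ref{conjecture:weak}; the strong, diameter-free estimate \eqref{estimate:strong_conjecture} carries no $N^{\epsilon}$ and is strictly stronger.

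Two further points in the soft part of your argument also need care. First, the conjectured exponent is the maximum of three affine functions of $(1/p,1/q)$, so Riesz--Thorin from the outer corners $(1,1)$, $(\infty,\infty)$, $(1,\infty)$, $(2,2)$ plus a single anchor at $p_0=2$ cannot generate the whole region: interpolation reproduces the conjectured bound only on segments where that exponent is affine, so you would need sharp estimates at every critical vertex where two of the three terms balance, and each of those is of the same difficulty as the one anchor you could not prove. Second, expanding an even power $\|\avg^{\gamma}_{\myset}f\|_{\ell^{q_0}}^{q_0}$ for a general $f$ does not reduce to the unweighted solution count of the system; one gets weighted counts, which is why the paper instead proves restricted weak-type (Lorentz) bounds via the method of refinements (Theorem~\ref{theorem:refinements}) --- where the pruning of diagonal and special solutions is exactly what makes the inhomogeneous count $\maxnumreps$ the relevant quantity --- and then interpolates, again landing at Conjecture~\ref{conjecture:weak} rather than Conjecture~\ref{conjecture:strong}.
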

\noindent 
Based on presently known results, the truth may be the following weaker `diameter dependent' estimate. 
\begin{conjecture}[Weak form]\label{conjecture:weak}
Let $\gamma(T) = \big( \phi_1(T),\dots,\phi_{\numpolys}(T) \big) \subset \Z[T]$ be a separated system of $\numpolys$ polynomials with integer coefficients and total degree $D_{\gamma}$. 
If $1 < p \leq 2 \leq q$ and $\epsilon>0$, then there exists a positive constant $C_{p,q,\epsilon}$ such that for all $\myset \subseteq \{1,\dots,N\}$, we have the inequality 
\begin{equation}\label{estimate:weak_conjecture} 
\| \avg^{\gamma}_{\myset} \|_{\ell^{p}(\Z) \to \ell^{q}(\Z^{\numpolys})} 
\leq C_{p,q,\epsilon} N^{\epsilon} 
\left( |\myset|^{-D_{\gamma}(\frac{1}{p}-\frac{1}{q})} + |\myset|^{\frac{1}{q}-1} + |\myset|^{-\frac{1}{p}} \right) 
\end{equation}
as $N \in \N$ tends to infinity. 
\end{conjecture}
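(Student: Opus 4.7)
The plan is to reduce the conjectured $\ell^p$-improving estimate to a high-moment paucity bound for an associated inhomogeneous Vinogradov-type system, and then interpolate against two trivial endpoints. Write $A := \avg^{\gamma}_{\myset}$. Two of the three terms on the right-hand side of \eqref{estimate:weak_conjecture} come from essentially trivial estimates: $\|A\|_{\ell^{1}\to\ell^{q}} = |\myset|^{1/q-1}$ follows from the fact that $A$ is convolution with the probability measure $|\myset|^{-1}\sum_{n\in\myset}\delta_{-\gamma(n)}$, and by duality $\|A\|_{\ell^{p}\to\ell^{\infty}} = |\myset|^{-1/p}$. These produce the second and third terms. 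The genuine content is the first term $|\myset|^{-D_\gamma(1/p-1/q)}$, which must come from cancellation along the curve.

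For the nontrivial term I would work on the $\ell^{2}$-axis. A standard $2k$-fold Fourier or iterated $TT^{\ast}$ expansion relates the endpoint estimate $\|A\|_{\ell^{2}\to\ell^{2k}}$ to averaged control of the representation function
\[
R_{k}(\vof{v}) := \#\left\{ (\vof{n},\vof{m}) \in \myset^{2k} : \sum_{i=1}^{k} \gamma(n_{i}) - \sum_{i=1}^{k} \gamma(m_{i}) = \vof{v} \right\}
\]
for the inhomogeneous Vinogradov-type system attached to $\gamma$. The target at $(p,q) = (2,2k)$, namely $\|A\|_{\ell^{2}\to\ell^{2k}} \lesssim N^{\epsilon} |\myset|^{-D_{\gamma}(1/2 - 1/(2k))}$, is the expected consequence of a paucity bound $R_{k}(\vof{v}) \lesssim N^{\epsilon}$ off the diagonal, taken with $k$ just above $D_{\gamma}$.

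I would then apply the paucity decomposition produced by the Wooley / Parsell--Wooley elimination method: for such $k$, one expects a splitting $R_{k} = R_{k}^{\mathrm{spe}} + R_{k}^{\mathrm{gen}}$ in which the special part is concentrated on the $\asymp k!\,|\myset|^{k}$ diagonal solutions where the multisets $\{n_{i}\}$ and $\{m_{i}\}$ coincide, and the generic part satisfies the pointwise bound $R_{k}^{\mathrm{gen}}(\vof{v}) \lesssim N^{\epsilon}$ uniformly in $\vof{v}$. Riesz--Thorin interpolation between this $(2,2k)$ endpoint and the two trivial endpoints above then delivers \eqref{estimate:weak_conjecture} over the full region $1 < p \leq 2 \leq q$, with the three terms on the right-hand side emerging naturally as the maxima of the three extremal vertex estimates.

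The hardest step is the paucity estimate for a general separated system. For the moment curve, Wooley's efficient congruencing machinery supplies the required $N^{\epsilon}$ bound on $R_{k}^{\mathrm{gen}}$ once $k$ just exceeds $D_{\gamma}$. For an arbitrary separated system $(\phi_{1},\ldots,\phi_{\numpolys})$ one must re-execute the Parsell--Wooley elimination, eliminating the highest-degree variable first and descending through the polynomials in order of decreasing degree while keeping the modular structure compatible across rounds. A further subtlety is that one needs the paucity bound uniformly in $\myset \subseteq \{1,\ldots,N\}$ rather than merely for the full interval, so the elimination must survive arbitrary sparse restriction of the variables. Finally, removing the $N^{\epsilon}$ loss to reach the strong form \eqref{estimate:strong_conjecture} appears to lie outside the reach of paucity-based methods and would likely require a genuinely different approach.
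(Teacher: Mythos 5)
You are addressing Conjecture~\ref{conjecture:weak}, which is open: the paper itself proves it only at the single exponent pair $p=2-\frac{1}{\numpolys+1}$, $q=p'$ (Theorem~\ref{theorem:improving}), so no complete proof was available to reproduce. The issue is that your reduction is not merely incomplete but rests on a false input. You posit that for $k$ just above $D_\gamma$ the off-diagonal count satisfies $R_k^{\mathrm{gen}}(\vof{v})\lesssim N^{\epsilon}$ uniformly in $\vof{v}$. Take $\myset=[N]$: there are $N^{2k}$ tuples in $\myset^{2k}$, the vector $\sum_i\gamma(n_i)-\sum_i\gamma(m_i)$ takes only $O(N^{D_\gamma})$ values, the diagonal contributes $O_k(N^{k})$ tuples and the fibre over $\vof{v}=\vof{0}$ contributes $O(N^{2k-1})$, so by pigeonhole some $\vof{v}\neq\vof{0}$ carries $\gtrsim N^{2k-D_\gamma}\geq N^{D_\gamma}$ off-diagonal solutions. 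Paucity is intrinsically a subcritical phenomenon: Conjecture~\ref{conjecture:paucity} is restricted to $t<D_\gamma$ and even there asserts only $\maxnumreps^{\gamma}_{t}(\myset,[N])\lesssim N^{\epsilon}|\myset\cap[N]|^{t-1}$ (a saving of one variable, not a bound of size $N^\epsilon$), while efficient congruencing/decoupling in the supercritical range yields the main term $N^{2k-D_\gamma+\epsilon}$, which is the opposite of what you need. Relatedly, your target endpoint $\|\avg^{\gamma}_{\myset}\|_{\ell^{2}\to\ell^{2k}}\lesssim N^{\epsilon}|\myset|^{-D_\gamma(\frac12-\frac1{2k})}$ is itself false once $D_\gamma\geq 3$ and $k>D_\gamma$: testing on a delta function gives the lower bound $|\myset|^{\frac{1}{2k}-1}$, which exceeds $|\myset|^{-D_\gamma(\frac12-\frac1{2k})}$ because $D_\gamma(k-1)>2k-1$ in that range. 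This is precisely why the conjectured right-hand side carries three terms; on the $p=2$ axis with $q$ large the curvature term is not the dominant one.

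There is also a structural problem with the final step: the conjectured bound is a maximum of three powers of $|\myset|$, piecewise linear in $(1/p,1/q)$, and Riesz--Thorin from two trivial vertices plus one point on the $p=2$ axis cannot reproduce such a max; the genuinely hard points sit at the crossings, on and near the duality line $1/q=1-1/p$. That is exactly where the paper works: Christ's method of refinements, run arithmetically so as to prune diagonal and other special solutions (Theorem~\ref{theorem:refinements}), gives restricted weak-type bounds at $p=2-\frac1s$, $q=p'$ in terms of the inhomogeneous quantity $\maxnumreps^{\gamma}_{s-1}(\myset,I)$ with every coordinate $a_j\neq 0$, and the Parsell--Wooley elimination argument (Theorem~\ref{theorem:paucity}) supplies the required paucity only for $s-1=\numpolys$ variables, which is why only one new exponent pair is obtained. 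If you want to pursue the full conjecture, the correct open problem is Conjecture~\ref{conjecture:paucity} for all $t<D_\gamma$ (with the $|\myset|^{t-1}$ scaling and uniformity in sparse $\myset$), not a pointwise $N^{\epsilon}$ bound in the supercritical range.
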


When $1/q=1-1/p$, the second summand dominates the third summand and there arises a critical exponent $p_{\gamma} := 2-{D_{\gamma}}^{-1}$ in the conjecture determined by when the first two summands in \eqref{estimate:strong_conjecture} balance. 
We refer to exponents $1 \leq p < p_\gamma$ as subcritical while exponents $p_\gamma<p\leq2$ are supercritical so that $p_\gamma$ divides our analysis into subcritical and supercritical regimes. 
% Note that if \eqref{estimate:weak_conjecture} holds for some $p_0>p_\gamma$, then \eqref{estimate:strong_conjecture} often holds for $p>p_0$. This is due to the existence of $\epsilon$-removal lemmas; see \cite{}. 
Moreover, \eqref{estimate:strong_conjecture} is true at the exponents $(p,q)=(1,\infty)$ and $(p,q)=(2,2)$ as shown by Young's inequality and Plancherel's theorem respectively. 
Note that my definition of subcritical vs supercritical differs from \cite{DHV} to be more in line with the use for a subcritical vs supercritical number of variables in the underlying system of Diophantine equations that I will study.

The arithmetic method of refinements in \cite{DHV} (and Section~\ref{section:AMoR}) permits us to establish a relationship between the study of inhomogeneous systems of Diophantine equations and subcritical $\ell^p$-improving estimates. 
Fix $\gamma = (\phi_1, \dots, \phi_{\numpolys})$, our separated system of $\numpolys$ univariate polynomials with integer coefficients. 
Also, fix $\myset$ to be a (possibly finite or infinite) subset of the natural numbers. 
Let $s \in \N$. For $\vof{a} \in \Z^s$ and an interval $I \subset \Z$, define the set of solutions 
\begin{equation}\label{def:numreps}
\numreps_{s}^{\gamma}(\myset,I,\vof{a}) 
:= 
\big\{ \vof{m}, \vof{n} \in (\myset\cap I)^s : \sum_{i=1}^s \big( \phi_j(n_i)-\phi_j(m_i) \big) = a_j \quad (1 \leq j \leq r) \big\} 
.\end{equation}
Furthermore, define the quantity 
\begin{align}\label{def:maxnumreps}
& \maxnumreps^\gamma_{s}(\myset,I) 
:= \sup_{\substack{\vof{a}\in\Z^{s} \\ \text{each } a_j \neq 0}} |\numreps^\gamma_{s}(\myset,I,\vof{a})|
.\end{align}
Note that the sets $\numreps^\gamma_{s}(\myset,I,\vof{a})$ are empty for $|\vof{a}|$ sufficiently large with respect to $I$, so that the supremum above is a maximum over a finite number of quantities. 

Define $[N]:=\{1,\dots,N\}$ for $N \in \N$; for our purposes, the natural numbers do not contain zero. 
The first step is to reduce the $\ell^p$-improving problem to a paucity estimate via the arithmetic method of refinements. 
\begin{theorem}\label{theorem:refinements}
Let $\myset \subseteq \N$ be a finite subset and $\gamma \subset \Z[X]$ be a separated system of $\numpolys$ polynomials, for some $\numpolys \geq 1$, which is not comprised of a single linear polynomial. 
If $s \in \N$, then for each exponent $p = 2-s^{-1}$ there exists a positive constant $C_{p,\gamma}$, depending only upon $p$ and $\gamma$, such that we have 
\begin{equation}\label{estimate:bound_avg_by_paucity}
\| \avg^{\gamma}_{\myset} \|_{\ell^{p,1}(\Z) \to \ell^{p',\infty}(\Z^{\numpolys})} 
\leq 
C_{p,\gamma}  |\myset|^{-1} 
\left( |\myset|^{s-1}  + |\myset| \times \maxnumreps^\gamma_{s-1}(\myset,I)  \right)^{\frac{1}{2s-1}}
.\end{equation}
Here, $I$ is any interval containing $\myset$. 
\end{theorem}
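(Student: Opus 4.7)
The plan is to apply the arithmetic method of refinements (AMoR), following \cite{DHV} and as developed in Section~\ref{section:AMoR}, which transfers discrete $\ell^p$-improving estimates into counting problems on Vinogradov-type Diophantine systems. I first pass to the restricted weak-type formulation: it suffices to prove the bilinear bound
\[
\langle \avg^{\gamma}_{\myset} \1_F, \1_G \rangle
\leq
C_{p,\gamma}\, |\myset|^{-1}\, \bigl(|\myset|^{s-1} + |\myset|\, \maxnumreps^{\gamma}_{s-1}(\myset, I)\bigr)^{\frac{1}{2s-1}}\, |F|^{\frac{1}{p}}\, |G|^{\frac{1}{p'}}
\]
for all finite $F, G \subset \Z^{\numpolys}$. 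Writing $\mathcal{E}(F, G) := \#\{(\vof{x}, n) \in G \times \myset : \vof{x}+\gamma(n) \in F\}$ for the associated incidence count, this reformulates as the polynomial incidence inequality $\mathcal{E}(F, G)^{2s-1} \lesssim \bigl(|\myset|^{s-1} + |\myset|\,\maxnumreps^{\gamma}_{s-1}(\myset, I)\bigr)|F|^{s}|G|^{s-1}$.

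The refinement scheme proceeds by dyadic pigeonhole on $G$ and $F$ to produce level sets on which the fibre counts $M(\vof{x}) := \#\{n \in \myset : \vof{x}+\gamma(n) \in F\}$ and the dual $N(\vof{y}) := \#\{n \in \myset : \vof{y}-\gamma(n) \in G\}$ are essentially constant at heights $\lambda$ and $\mu$ respectively, with $\lambda |G| \sim \mu |F| \sim \mathcal{E}(F,G)$. Iterating the refinement $s$ times on the $G$-side and $s-1$ times on the $F$-side extracts, for each incident base triple, a configuration of $s$-tuples $(n_1, \ldots, n_s)$ and $(s-1)$-tuples $(m_2, \ldots, m_s)$ in $\myset$. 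Suitable manipulation of the resulting configuration space, followed by a change of variables $\vof{y}_i := \vof{x}+\gamma(n_i) \in F$ and a Cauchy--Schwarz step on the fibre over $\vof{y}_1$, reduces the incidence bound to controlling the auxiliary Vinogradov-type count
\[
\#\bigl\{(\vof{n}, \vof{m}) \in \myset^{s-1} \times \myset^{s-1} : \sum_{i} \bigl(\gamma(n_i) - \gamma(m_i)\bigr) = \vof{a}\bigr\}
\]
summed against data-dependent weights in $\vof{a}$. Splitting this sum into the diagonal $\vof{a} = \vof{0}$, which contributes the $|\myset|^{s-1}$ term; the all-nonzero stratum, which is controlled by $\maxnumreps^{\gamma}_{s-1}(\myset, I)$ weighted by the admissible values of $\vof{a}$; and the intermediate partially degenerate strata, then reassembling, yields the target incidence inequality.

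The principal technical obstacle will be the treatment of the intermediate strata, since $\numreps^{\gamma}_{s-1}(\vof{a})$ may be large when a proper subset of coordinates of $\vof{a}$ vanishes and such configurations lie outside the definition of $\maxnumreps^{\gamma}_{s-1}$. Here the separated-degrees hypothesis and the exclusion of a single linear polynomial are essential: they guarantee that in each partially degenerate stratum at least one polynomial equation $\sum_i(\phi_j(n_i) - \phi_j(m_i)) = 0$ cuts out a proper subvariety of $\myset^{2(s-1)}$, producing a $|\myset|^{-1}$ gain over the trivial count via the Parsell--Wooley style elimination argument advertised in the paper's abstract. This gain absorbs the intermediate strata into the diagonal term $|\myset|^{s-1}$, and backtracking through the restricted weak-type reduction completes the proof.
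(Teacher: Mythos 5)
Your overall skeleton (restricted weak-type reduction, Christ-style refinement, then counting against $\maxnumreps^{\gamma}_{s-1}$) matches the paper, but two steps in your write-up are genuinely wrong or missing. First, the reduction itself is mis-stated: for $T:\ell^{p,1}\to\ell^{p',\infty}$ the bilinear test is $\IP{T\1_F}{\1_G}\leq C|F|^{1/p}|G|^{1/p}$ (both exponents $1/p$, since $(p')'=p$), so the incidence target must be $\mathcal{E}(F,G)^{2s-1}\lesssim\big(|\myset|^{s-1}+|\myset|\maxnumreps^{\gamma}_{s-1}(\myset,I)\big)|F|^{s}|G|^{s}$, not $|F|^{s}|G|^{s-1}$. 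Your asymmetric version is false: take $\gamma=(T,T^2)$, $\myset=[N]$, $s=2$, and $F\supseteq G$ two huge boxes, so that $\mathcal{E}(F,G)=|\myset||G|$; then your claimed bound forces $|\myset|^{2s-1}\lesssim|\myset|^{s-1}+|\myset|\maxnumreps^{\gamma}_{s-1}$, which fails since here $\maxnumreps^{\gamma}_{1}\leq 1$. So as written you are setting out to prove a false inequality; with the correct exponent $1/p$ on $|G|$ this step is fine and is exactly how the paper begins.

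The more serious gap is in the core: you run the full refinement first and only afterwards stratify the resulting count over $\vof{a}$ ($\vof{a}=\vof{0}$, all coordinates nonzero, partially zero). This is precisely the naive scheme the paper shows cannot work (the $s=2$, $\gamma=(X^2)$ discussion): diagonal solutions force the unpruned configuration count to be at least $|\myset|^{t}$ at level $t$ regardless of how small $\alpha,\beta,|E|$ are, and the $|\myset|^{s-1}$ term in the theorem is not there to absorb the $\vof{a}=\vof{0}$ stratum --- in the paper it comes from the trivial regime where one of the means $\alpha,\beta$ is $O_{\gamma,s}(1)$. Indeed a bound $\numreps^{\gamma}_{s-1}(\myset,I,\vof{0})\lesssim|\myset|^{s-1}$ is itself a paucity statement (available via Lemma~\ref{lemma:PW3} only when $s-1=\numpolys$), and Theorem~\ref{theorem:refinements} is stated for all $s$ and arbitrary $\myset$ precisely because its proof never needs it. Your treatment of the partially degenerate strata also does not add up quantitatively: one nontrivial equation gives at best a saving of one variable, i.e.\ $O(|\myset|^{2s-3})$ solutions, which exceeds the allowed $|\myset|^{s-1}$ as soon as $s\geq3$, and no weighting argument is supplied to repair this; moreover Parsell--Wooley elimination is the engine of Theorem~\ref{theorem:paucity}, not of this theorem, so invoking it here conflates the two halves of the paper. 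What the paper actually does is prune \emph{inside} the tower: at each step the newly chosen variable must satisfy $\phi(m_{t+1})\neq\phi(n_j)$ (and symmetrically on the $B$-side), the discarded special solutions being at most $(t+1)K_\gamma$ per previously fixed tuple by the Fundamental Theorem of Algebra, hence negligible under the standing assumption $\alpha,\beta\geq2^{s+1}(s+1)K_\gamma$; the flowing lemma then keeps $|A_{s-1}^{\genericsols}|\gtrsim(\alpha\beta)^{s-1}$ while a union bound over the refined target set gives $|A_{s-1}^{\genericsols}|\leq\maxnumreps^{\gamma}_{s-1}(\myset,I)\,|E|$, and combining with $\alpha\leq|\myset|$ finishes. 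Without this stage-by-stage pruning (or an equivalent device preserving the lower bound on the generic configurations), your proposed absorption of the degenerate strata does not go through.
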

\noindent The exponent $p'$ is the dual exponent to $p$; that is, defined by the relation $1/p'+1/p=1$. So, in the theorem above, $p'=\frac{2s-1}{s-1}>2$ for $s \in \N$. 

In tandem with Theorem~\ref{theorem:refinements}, I conjecture the optimal paucity estimates which would imply the almost-optimal $\ell^p$-improving estimates. 
\begin{conjecture}[Paucity conjecture]\label{conjecture:paucity}
Let $\gamma$ be a separated system of polynomials with total degree $D_\gamma$. 
For each integer $1 \leq t < D_{\gamma}$ and for each $\epsilon>0$, there exists a constant $C_{\gamma,t,\epsilon}>0$, independent of $N$ such that for all $N \in \N$ we have 
\begin{equation}
\maxnumreps^\gamma_{t}(\myset,[N]) 
\leq 
C_{\gamma,t,\epsilon} |\myset\cap[N]|^{t-1+\epsilon} 
.\end{equation}
\end{conjecture}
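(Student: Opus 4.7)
The plan is to apply the Parsell--Wooley paucity paradigm: for a system of Diophantine equations arising from a separated polynomial curve in the subcritical regime $t < D_\gamma$, the non-diagonal solutions are greatly outnumbered by the diagonal ones. Note that the hypothesis that every component of $\mathbf{a}$ is nonzero automatically excludes diagonal solutions (those where $\mathbf{n}$ is a permutation of $\mathbf{m}$, which force $\mathbf{a} = \mathbf{0}$), so the bound $|\myset|^{t-1+\epsilon}$ is precisely the desired count of non-diagonal configurations.

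I would proceed by induction on $t$. The base case $t = 1$ is a direct application of B\'ezout's theorem: the system $\phi_j(n) - \phi_j(m) = a_j$ with $a_j \neq 0$ defines plane algebraic curves which, after removing the diagonal factor $n - m$ (forbidden since $a_j \neq 0$), intersect in $O_\gamma(1)$ integer points, uniformly in $\mathbf{a}$. For the inductive step $t \geq 2$, I would implement an elimination procedure in the spirit of Wooley and Parsell--Wooley: single out a distinguished pair, say $(n_t, m_t)$, and rewrite the residual system for the first $t-1$ pairs as a Diophantine problem with modified data $\mathbf{a}' := \mathbf{a} - (\gamma(n_t) - \gamma(m_t))$. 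The elimination uses the highest-degree polynomial $\phi_{\numpolys}$ to order the variables and control the effect on the residual, so that the inductive hypothesis applies cleanly on the stratum where $\mathbf{a}'$ has all components nonzero.

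The main obstacle is the degenerate strata where one or more components of $\mathbf{a}'$ vanish: on these, the inductive hypothesis does not directly apply, because the residual problem is no longer in the all-nonzero regime. A naive partition over vanishing patterns inflates the bound to $|\myset|^{t+\epsilon}$, losing a crucial factor of $|\myset|$. The Parsell--Wooley elimination is designed precisely to extract this missing savings on the degenerate strata by recognising the vanishing conditions as auxiliary homogeneous Vinogradov-type equations and bounding them through efficient congruencing or decoupling. I expect the delicate stratification of the solution set, and the matching of these subcritical savings across strata so as to close the induction with the target exponent, to be the main technical challenge; the separated-degree hypothesis on $\gamma$ should play a key role, since it provides a canonical ordering in which to perform the elimination and a transversality that prevents degenerate vanishing of leading coefficients.
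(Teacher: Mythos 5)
You are attempting to prove a statement that the paper itself records only as a conjecture: there is no proof of Conjecture~\ref{conjecture:paucity} in the paper to compare against. The paper verifies just the single case $t=\numpolys$ (Theorem~\ref{theorem:paucity}), and it does so not by an induction on $t$ but by the Parsell--Wooley elimination identity, namely the factorization $P_{\gamma}\big(\sigma_{1,\numpolys}(\vof{X})-\phi_1(Y),\dots,\sigma_{\numpolys,\numpolys}(\vof{X})-\phi_{\numpolys}(Y)\big)=R(\vof{X};Y)\prod_{i=1}^{\numpolys}(X_i-Y)$ coming from Lemmas~\ref{lemma:PW1} and \ref{lemma:PW2}, combined with the divisor bound and Lemma~\ref{lemma:PW3}. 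The remaining range $\numpolys<t<D_\gamma$ (already $\numpolys=1$, $2\le t<\deg\phi_1$ is the classical paucity problem for a single diagonal equation) is left open, so a complete argument here would be a substantial new theorem; your proposal, by its own admission, leaves the decisive step (the degenerate strata) to an unspecified appeal to efficient congruencing or decoupling, and hence is a plan rather than a proof.

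The gap is in fact structural and appears before the degenerate strata. If you split off a distinguished pair $(m_t,n_t)$ and apply the inductive hypothesis to the residual system with data $\vof{a}'=\vof{a}-\big(\gamma(n_t)-\gamma(m_t)\big)$, then summing the bound $|\myset\cap[N]|^{t-2+\epsilon}$ over the roughly $|\myset\cap[N]|^{2}$ admissible pairs gives $|\myset\cap[N]|^{t+\epsilon}$, a full factor of $|\myset\cap[N]|$ worse than the target; summing instead over the values of $\vof{a}'$ is no better, since $\sum_{\vof{a}'}|\numreps^{\gamma}_{t-1}(\myset,[N],\vof{a}')|=|\myset\cap[N]|^{2(t-1)}$. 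The missing factor must be extracted from the joint structure of all $2t$ variables at once --- in the proven case $t=\numpolys$ this is precisely what the eliminant and the product $\prod_i(m_i-n_1)$ dividing the fixed nonzero integer $P_\gamma(\vof{M})$ deliver, converting the count into a divisor problem --- and no such mechanism exists in a fibred induction on $t$. The tools you invoke for the degenerate strata cannot supply it either: efficient congruencing and decoupling bound complete mean values over $[N]$ by powers of $N$, not restricted counts over an arbitrary sparse $\myset$ by powers of $|\myset\cap[N]|$, and in the subcritical range their diagonal contribution is already of the size you need to beat. Finally, your base case is overstated: for $\numpolys=1$ the equation $\phi_1(n)-\phi_1(m)=a_1\neq 0$ does not have $O_\gamma(1)$ solutions uniformly in $a_1$; the paper's own base case only obtains the divisor-type bound $\mathcal{L}(c,X)$, which moreover is a power of $N$ rather than of $|\myset\cap[N]|$ and so does not by itself yield the conjectured $N$-independent bound $|\myset\cap[N]|^{\epsilon}$ for sparse $\myset$.
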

\noindent Compare with Conjecture~3 at the bottom of page~58 in \cite{HB:notes}. 

If Conjecture~\ref{conjecture:paucity} is true, then Theorem~\ref{theorem:refinements} implies that for each $1 \leq s \leq D_{\gamma}$ we have 
\[
\| \avg^{\gamma}_{\myset\cap[N]} \|_{\ell^{\frac{2s-1}{s},1}(\Z) \to \ell^{\frac{2s-1}{s-1},\infty}(\Z^{\numpolys})} 
\lesssim_\epsilon 
N^{\epsilon} |\myset\cap[N]|^{\frac{s-1}{2s-1}-1} 
= 
N^{\epsilon} |\myset\cap[N]|^{-\frac{s}{2s-1}}
.\]
Consequently, Conjecture~\ref{conjecture:weak} would be established by interpolation with trivial bounds. 

My final step is verify Conjecture~\ref{conjecture:paucity} for $t$ sufficiently small. 
\begin{theorem}\label{theorem:paucity}
If $\myset$ is an infinite subset of $\N$ and that $\gamma$ is a separated system of $\numpolys$ polynomials, then for each $\vof{a} \in \Z^{\numpolys}\setminus\{\vof{0}\}$, we have 
\begin{equation}
\numreps^\gamma_{\numpolys}(\myset,[N],\vof{a}) 
\leq 
C_{\gamma,\numpolys,\epsilon} N^{\epsilon}  |\myset\cap[N]|^{\numpolys-1} 
% \quad \for \each \quad 
% \vof{a} \in \Z^{\numpolys}\setminus\{\vof{0}\}
\end{equation}
as $N$ tends to infinity. 
The constant does not depend on $\vof{a}$. 
\end{theorem}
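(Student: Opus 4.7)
The plan is to establish this paucity estimate through a structured counting argument in the spirit of the elimination method of Wooley and Parsell--Wooley. I would fix $\numpolys - 1$ of the $\vof{m}$-variables --- at the cost of a factor $|\myset \cap [N]|^{\numpolys - 1}$ --- and reduce the count of the remaining $\numpolys + 1$ configurations to $O_\epsilon(N^\epsilon)$. After fixing $(m_1, \ldots, m_{\numpolys-1}) \in (\myset \cap [N])^{\numpolys - 1}$ (with a combinatorial factor of $(\numpolys!)^2$ to account for re-orderings), the remaining variables $(m_\numpolys, n_1, \ldots, n_\numpolys)$ satisfy
\[
\sum_{i=1}^{\numpolys} \phi_j(n_i) - \phi_j(m_{\numpolys}) = b_j, \qquad j = 1, \ldots, \numpolys,
\]
where $b_j := a_j + \sum_{i=1}^{\numpolys-1}\phi_j(m_i)$. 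For each fixed $m_\numpolys$, this is a balanced $\numpolys \times \numpolys$ Waring-type system for $\vof{n}$; the separated degree structure together with Bezout's theorem gives at most $\prod_j \deg(\phi_j) = O_\gamma(1)$ integer solutions $\vof{n}$.

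It remains to bound the number of $m_\numpolys \in \myset \cap [N]$ admitting at least one solution $\vof{n} \in (\myset \cap [N])^\numpolys$. This is where the elimination enters. By combining the $\numpolys$ equations using the strictly increasing degrees of the $\phi_j$'s --- iteratively eliminating the $n_i$'s via the graded symmetric-function structure, or via successive resultants in the style of Parsell--Wooley --- one should derive a multiplicative Diophantine identity of the form $U \cdot V = W$, where $U, V$ are integer-valued polynomial expressions in $m_\numpolys$ and a distinguished root $n_i$, while $W$ is an integer depending only on $(m_1, \ldots, m_{\numpolys-1})$ and $\vof{a}$. The hypothesis $\vof{a} \neq \vof{0}$ guarantees $W \neq 0$, and then the classical divisor bound $\tau(|W|) \ll_\epsilon N^\epsilon$ (valid since $|W|$ is polynomially bounded in $N$) caps the number of factorizations, and hence the number of valid pairs $(m_\numpolys, n_i)$, at $O_\epsilon(N^\epsilon)$. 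Combining these estimates with Step 2 yields the desired bound $\ll_\epsilon N^\epsilon |\myset \cap [N]|^{\numpolys - 1}$.

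The main obstacle will be carrying out the elimination in a way that produces a truly multiplicative Diophantine identity whose non-triviality is controlled precisely by $\vof{a} \neq \vof{0}$, rather than just a generic polynomial relation (whose integer-point count is only $O(N^{1/\deg + \epsilon})$ via Bombieri--Pila, which is too weak). The expected degenerate branches --- where the elimination collapses because of algebraic coincidences among the $\phi_j$'s, or because the right-hand constant $W$ vanishes for exceptional choices of $(m_1,\ldots,m_{\numpolys-1})$ --- would have to be absorbed by an inductive reduction to the paucity estimate for a separated sub-system of $\numpolys - 1$ polynomials. The Parsell--Wooley scheme provides the combinatorial bookkeeping needed to track these reductions and to ensure that the exponent on $|\myset \cap [N]|$ is not degraded at any stage of the induction.
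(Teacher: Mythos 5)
Your skeleton --- fix $\numpolys-1$ of the variables, eliminate via the Parsell--Wooley structure to reach a multiplicative identity, and finish with the divisor bound --- is essentially the paper's treatment of the generic solutions, with the roles of $\vof{m}$ and $\vof{n}$ interchanged. The identity you are after is explicit there: writing $\sigma_{i,\numpolys}(\vof{X})=\sum_{j}\phi_i(X_j)$, Lemmas~\ref{lemma:PW1} and~\ref{lemma:PW2} produce an asymptotically definite polynomial $P_\gamma$ with $P_{\gamma}\bigl(\sigma_{1,\numpolys}(\vof{X})-\phi_1(Y),\dots,\sigma_{\numpolys,\numpolys}(\vof{X})-\phi_{\numpolys}(Y)\bigr)=R(\vof{X};Y)\prod_{i=1}^{\numpolys}(X_i-Y)$, so in your normalization $W=P_\gamma(b_1,\dots,b_\numpolys)$. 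The genuine gap is your claim that $\vof{a}\neq\vof{0}$ guarantees $W\neq 0$. It does not: $W$ is the value of a nonconstant polynomial in the fixed variables $(m_1,\dots,m_{\numpolys-1})$, and it can vanish for admissible choices of those variables no matter what $\vof{a}$ is; in the paper the hypothesis $\vof{a}\neq\vof{0}$ is used elsewhere (in the base cases, and when a coincidence $m_i=n_j$ allows an equation to be deleted), not to force the eliminated constant to be nonzero. Moreover your fallback for the vanishing branch --- ``an inductive reduction to a sub-system of $\numpolys-1$ polynomials'' --- does not apply: when $W=0$ no pair of variables need coincide, so you cannot delete a pair of variables along with an equation, and the inductive paucity statement concerns $\numpolys-1$ polynomials in $\numpolys-1$ pairs of variables. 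The paper closes this branch (its case $\numreps^{\gamma,2}$) by a different mechanism: the vanishing condition confines the fixed tuple to the zero set of a nonzero polynomial (non-vanishing of the composite follows from asymptotic definiteness of $P_\gamma$), which meets the grid in $O\bigl(|\myset\cap[N]|^{\numpolys-2}\bigr)$ points by a Schwartz--Zippel type count (Lemma~3.5 of \cite{HW}), and then Lemma~\ref{lemma:PW3} gives $O_\gamma(1)$ solutions of the resulting symmetric system for the remaining variables, so that the degenerate branch contributes $O\bigl(|\myset\cap[N]|^{\numpolys-1}\bigr)$ in total. Without this (or some substitute) your argument does not close.

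Two further corrections. First, ``Bezout's theorem gives at most $\prod_j\deg(\phi_j)$ integer solutions'' of the balanced symmetric system is not right as stated: Bezout controls only isolated solutions, and a priori the system can have positive-dimensional components; the $O_\gamma(1)$ bound is precisely Lemma~\ref{lemma:PW3} of \cite{PW}, whose proof needs the Jacobian factorization $\det(\phi_i'(x_j))=V_\numpolys\cdot P_\gamma$ with $P_\gamma$ asymptotically definite to dispose of the singular solutions. Second, once the multiplicative identity is in hand, this per-$m_\numpolys$ step is unnecessary in the generic branch: the divisor bound fixes the values $d_0=R(\vof{n};m_\numpolys)$ and $d_i=n_i-m_\numpolys$, and substituting $n_i=m_\numpolys+d_i$ into $R=d_0$ leaves a univariate polynomial equation in $m_\numpolys$, so $m_\numpolys$ and then all of $\vof{n}$ are determined up to $O_\gamma(1)$ choices; this is exactly how the paper concludes. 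Finally, note that the paper additionally separates the coincidence case $m_i=n_j$ and treats it by induction on $\numpolys$, with base cases a single polynomial of degree at least two and a pair $(\phi_1,\phi_2)$ with $\phi_1$ linear; that is where the induction and the hypothesis $\vof{a}\neq\vof{0}$ genuinely enter, not in the $W=0$ branch.
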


As an immediate corollary, we obtain the bound 
\begin{equation}
\maxnumreps^\gamma_{\numpolys}(\myset,[N]) 
\leq 
C_{\gamma,\numpolys,\epsilon} N^{\epsilon}  |\myset\cap[N]|^{\numpolys-1} 
\end{equation}
as $N$ tends to infinity. 
Using this latter estimate, Theorem~\ref{theorem:refinements} and Theorem~\ref{theorem:paucity} combine to immediately imply the following $\ell^p$-improving estimate. 
\begin{theorem}\label{theorem:improving}
Suppose that $\myset$ is an infinite subset of $\N$ and that $\gamma$ is a separated system of $\numpolys$ polynomials. 
Then \eqref{estimate:weak_conjecture} of Conjecture~\ref{conjecture:weak} holds for $p:=2-\frac{1}{r+1}$ and $q:=p'$. 
\end{theorem}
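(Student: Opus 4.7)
My plan is to combine the two preceding theorems and then pass from restricted weak-type to strong-type by interpolation; no new arithmetic input is required.

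I fix $N \in \N$, restrict attention to the finite set $\myset \cap [N]$, and write $M := |\myset \cap [N]|$. Applying Theorem~\ref{theorem:refinements} with $s = \numpolys+1$ and $I = [N]$ picks out the exponent $p = 2 - 1/(\numpolys+1) = (2\numpolys+1)/(\numpolys+1)$ and yields the restricted weak-type bound
\[
\|\avg^\gamma_{\myset \cap [N]}\|_{\ell^{p,1}(\Z) \to \ell^{p',\infty}(\Z^{\numpolys})}
\lesssim_{\gamma} M^{-1} \bigl( M^{\numpolys} + M \cdot \maxnumreps^\gamma_{\numpolys}(\myset,[N]) \bigr)^{1/(2\numpolys+1)} .
\]
Theorem~\ref{theorem:paucity}, after taking the supremum over nonzero $\vof{a} \in \Z^{\numpolys}$, supplies $\maxnumreps^\gamma_{\numpolys}(\myset,[N]) \lesssim_{\gamma,\epsilon} N^\epsilon M^{\numpolys-1}$, so the two terms inside the parenthesis balance up to a factor of $N^\epsilon$, and the right-hand side collapses to $\lesssim_{\gamma,\epsilon} N^{\epsilon} M^{-(\numpolys+1)/(2\numpolys+1)}$.

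A direct check against \eqref{estimate:weak_conjecture} at $(p, q) = (p, p')$ gives $1/q - 1 = -1/p = -(\numpolys+1)/(2\numpolys+1)$, which reproduces the second and third summands on the right-hand side of that display; the first summand (involving $D_\gamma$) is not binding because $D_\gamma \geq \numpolys+1$ whenever $\gamma$ is not a single linear polynomial.

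The remaining step is the passage from restricted weak-type on the Lorentz pair $(\ell^{p,1}, \ell^{p',\infty})$ to the strong-type Lebesgue bound $(\ell^p, \ell^{p'})$ demanded by the conjecture. I would do this by real (Marcinkiewicz) interpolation, sandwiching the critical exponent $p \in (1,2)$ between the trivial Lebesgue bounds $\|\avg^\gamma_{\myset \cap [N]}\|_{\ell^1 \to \ell^\infty} \leq M^{-1}$ from Young's inequality and $\|\avg^\gamma_{\myset \cap [N]}\|_{\ell^2 \to \ell^2} \leq 1$ from Plancherel. The main bookkeeping concern is tracking the $M$-dependence through the interpolation, but since every ingredient is a power of $M$ this is routine, and any residual loss is absorbed into the $N^\epsilon$ factor already present. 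This is the only step that does non-trivial work, and even there the scope is purely technical.
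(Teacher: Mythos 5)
Your proposal is correct and is essentially the paper's own argument: take $s=\numpolys+1$ in Theorem~\ref{theorem:refinements}, feed in the bound $\maxnumreps^{\gamma}_{\numpolys}(\myset,[N])\lesssim_{\epsilon}N^{\epsilon}|\myset\cap[N]|^{\numpolys-1}$ from Theorem~\ref{theorem:paucity} to get the restricted weak-type estimate $\lesssim_{\epsilon}N^{\epsilon}|\myset\cap[N]|^{-\frac{\numpolys+1}{2\numpolys+1}}$ at $(p,p')$, and then upgrade to the strong-type bound of \eqref{estimate:weak_conjecture} by interpolation with the trivial $\ell^{1}\to\ell^{\infty}$ and $\ell^{2}\to\ell^{2}$ bounds, absorbing all losses into $N^{\epsilon}$ (a step the paper likewise leaves implicit). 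The only point to state more carefully is that a single real-interpolation application cannot land on $(p,p')$ itself, since the restricted weak-type input sits exactly there; instead one first interpolates it with each trivial bound to obtain strong-type estimates at flanking points on the dual line $1/q=1-1/p$, and then interpolates between those two strong bounds through $(p,p')$, the resulting loss being a small power of $|\myset\cap[N]|\leq N$ and hence absorbable into $N^{\epsilon}$.
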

\noindent 
The moment curve $(X,X^2,\dots,X^\numpolys)$ of any degree $\numpolys$ provides an interesting example of this theorem. Previously, the same bounds were known only for $r=2$ or 3. Similarly, I obtain the same bounds for curves of the form $(X^j,X^{j+1},\dots,X^{j+\numpolys-1})$; these bounds are new for all $j,r \geq 2$ - even for $(X^2,X^3)$.

\begin{remark}
By an elaboration of my methods, I can obtain similar $\ell^{2,1}(\Z) \to \ell^{q,\infty}(\Z^{\numpolys})$ estimates for certain $q \geq 2$. 
I leave this to the interested reader. 
\end{remark}

% \begin{remark}
In collaborations, I will use Wooley's elimination method to give new subcritical discrete restriction estimate and new square function estimates. 
% \end{remark}

% % 
\subsection{Historical development}
% % 

The method of refinements is due to Michael Christ in \cite{Christ}. This method has been highly influential in the theory of continuous $L^p$-improving estimates. I do not attempt to survey this large body of works. 
Instead, I will mention that, to the best of my knowledge, this method has been used in the study of discrete fractional integrals and discrete $\ell^p$-improving estimates in \cite{Oberlin:discrete, Pierce:Heisenberg, Kim, DHV}. 

A crucial difference in \cite{DHV} compared to earlier works is the novel use of the method of refinements to prune special subvarieties from the analysis and reduce the problem to proving paucity estimates for inhomogeneous systems of equations. In turn, \cite{DHV} used Wooley's second order differencing method from \cite{Wooley:simultaneous, Wooley:symmetric, HW} to prove new paucity estimates for inhomogeneous systems of equations. Altogether, this led to new estimates of strength \eqref{estimate:weak_conjecture} for the $\ell^p$-improving problem in the following cases: 
\begin{itemize}
\item $\gamma(T)=(\phi(T))$ where $\phi$ is a single polynomial of degree at least two, 
\item $\gamma(T)=(T,\phi(T))$ $\phi$ is a single polynomial of degree at least two, and 
\item $\gamma(T)=(T,T^2,T^3)$ is the twisted cubic, alternately known as the moment curve in three dimensions. 
\end{itemize}
Previous results in the subcritical range were known when $\gamma(T)=(T^2)$ and $\gamma(T)=(T,\phi_2(T))$ where $\phi_2(T)$ is a quadratic polynomial. Indeed, Conjecture~\ref{conjecture:weak} was proved in these cases; see \cite{HKLMY} for more information.

% % 
\subsection{Approach}
% % 

My approach follows the one introduced in \cite{DHV}. 
In particular, Conjecture~\ref{conjecture:weak} is a direct generalization of Conjecture~1 in \cite{DHV} and Theorem~\ref{theorem:improving} is a generalization of Theorem~1 in \cite{DHV}. 
I adopt a more modular approach that explicitly states Theorem~\ref{theorem:refinements}. Theorem~\ref{theorem:refinements} was known to the authors in \cite{DHV}, but was suppressed from the exposition therein. Naturally, the proof of Theorem~\ref{theorem:refinements} in Section~\ref{section:AMoR} uses Christ's method of refinements introduced by Michael Christ in\cite{Christ}. 
However, I endeavored to give a novel exposition which is - hopefully - friendlier to analytic number theorists. In particular, my presentation is less encumbered by an imposing tower of refinements and emphasizes how to prune special subvarieties from the analysis of $\ell^p$-improving inequalities. 

The main new ingredient in my work is the use of Wooley's elimination method. This allows me to prove Theorem~\ref{theorem:paucity} which was out of reach of Wooley's second order differencing method used in \cite{DHV}. See \cite{Wooley:symmetric, PW} for previous works using elimination theory in the study of Diophantine equations.

% % 
\subsection{Outline of the paper}
% % 

In the next section, I give another exposition of the arithmetic method of refinements and prove Theorem~\ref{theorem:refinements}. 
In the final section, I prove Theorem~\ref{theorem:paucity} and discuss where the proof could be shortened an simplified for use in proving Theorem~\ref{theorem:improving}.

% % 
\subsection*{Acknowledgements}
% % 

I thank Trevor Wooley for suggesting to look at his works \cite{PW, Wooley:symmetric, Wooley:simultaneous} in relation to another problem; these provided the critical insight in extending the results of \cite{DHV} to those herein. 
I thank Marco Vitturi and Spyros Dendrinos for their collaboration in \cite{DHV}; my understanding of and perspective on the method of refinements developed through working with them. In particular, I formulated Theorem~\ref{theorem:refinements} during collaboration with them on \cite{DHV} though we chose not to include it in there. Any errors here are my own. 
I thank Julia Brandes and Oscar Marmon for conversations on the topic of inohomogeneous systems of Diophantine equations. 

Part of the work was undertaken while I participated in the Harmonic Analysis and Analytic Number Theory Trimester Program at the Hausdorff Institute of Mathematics in the summer of 2021. Their hospitality is greatly appreciated.

%
%% % 
%\subsection*{Declarations}
%% % 
%
%% \subsubsection*{Funding}
%\emph{Funding}. 
%The author has no auxiliary funding sources to declare. 
%
%% \subsubsection*{Competing interests}
%\noindent \emph{Competing interests}. 
%The author has no relevant financial or non-financial interests to disclose. 
%

% % % 
\section{The arithmetic method of refinements and the reduction to paucity estimates}\label{section:AMoR}
% % % 

In this section I give our arithmetic refinement method motivated by the (continuous) method introduced in \cite{Christ}. 
A key ingredient in the method of refinements is Christ's `flowing lemma'. 
We give an abstract presentation of the flowing lemma for a linear operator $T$ whose adjoint operator is $T^*$. Eftsoons we return to the averaging operators and begin the method refinements. My approach consists of two steps where the first one alone is insufficient for my purposes. The second one is a better version, suited to removing special subvarieties from our analysis. This latter functionality was the novel aspect of the arithmetic method of refinements in \cite{DHV}. 
While there is no new functionality in my approach here, it is my hope that a different view of the method and full details will open up the area and entice number theorists to it.

% --
\subsection{Flowing lemma} 
% --

By Lorentz theory, the operator bound $\| T \|_{\ell^{p,1} \to \ell^{p',\infty}} \leq C$, for an operator $T$, is equivalent to the estimates 
\begin{equation}\label{estimate:restricted_weak}
\IP{T\1_{E}}{\1_F} 
\leq 
C |E|^{\frac{1}{p}} |F|^{\frac{1}{p}} 
\end{equation}
for all finite, non-empty subsets of the integers $E$ and $F$. 
When $p := 2-\frac{1}{s}$ estimate \eqref{estimate:restricted_weak} holds if and only if 
\[
\IP{T\1_{E}}{\1_F}^{2s-1} 
\leq 
C^{2s-1} |E|^{s} |F|^{s}.
\] 
Soon we will take $T = |\myset|\avg_{\myset}$ and show that $C^{2s-1} \lesssim_{\gamma,s} |\myset|^{s-1} + \maxnumreps^\gamma(s-1;\myset) \times |\myset|$. 
Similarly, the Lorentz space bound $\| T \|_{\ell^{2,1} \to \ell^{q,\infty}} \leq C$, for an operator $T$, is equivalent to the estimates 
\begin{equation}%\label{estimate:restricted_weak:2toq}
\IP{T\1_{E}}{\1_F} 
\leq 
C |E|^{\frac{1}{2}} |F|^{\frac{1}{q'}} 
\end{equation}
for all finite non-empty subsets $E,F$ of the integers. 
When $p=2$ and $q=\frac{2s}{s-1}$, this becomes 
\[
\IP{T\1_{E}}{\1_F}^{2s} 
\leq 
C^{2s} |E|^{s} |F|^{s+1}
.\]

The following lemma, due to Christ in \cite{Christ}, is the foundation of the method of refinements. 
\begin{lemma}[Flowing lemma]\label{lemma:flowing}
Suppose that $T$ is a linear operator acting on characteristic functions of sets in $\Z^k$ which is bounded on $\ell^2(\Z^k)$ with a well-defined adjoint $T^*$. 
Let $E_0$ and $F_0$ be two finite subsets of $\Z^k$. 
For each $J \in \N$, there exists two sequences of subsets $E_0 \supset E_1 \supset \cdots \supset E_J$ and $F_0 \supset F_1 \supset \cdots \supset F_J$ satisfying the properties that for each $j \in [J]$ we have 
\begin{equation}\label{estimate:left_mean}
T\1_{E_{j}}(\vof{x}) \geq 2^{-j} \frac{\IP{T\1_{E_0}}{\1_{F_0}}}{|F_0|} 
\for \vof{x} \in F_j 
\and 
\end{equation}
\begin{equation}\label{estimate:right_mean}
T^*\1_{F_{j-1}}(\vof{y}) \geq 2^{-j} \frac{\IP{T\1_{E_0}}{\1_{F_0}}}{|E_0|} 
\for \vof{y} \in E_j 
.\end{equation}
Moreover, the sets $E_j$ and $F_j$ are large in the sense that 
\begin{equation}\label{estimate:right_density}
\IP{T\1_{E_{j}}}{\1_{F_{j}}} 
\geq 
2^{-j} {\IP{T\1_{E_0}}{\1_{F_0}}}
.\end{equation}
\end{lemma}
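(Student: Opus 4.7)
The plan is a standard two-step refinement by induction on $j$, alternating between pruning $E$ and pruning $F$ via Markov / pigeonhole estimates. Write $\alpha := \IP{T\1_{E_0}}{\1_{F_0}}$ for the initial pairing; the base case $j=0$ is trivial.

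Inductively, I would assume $E_{j-1}$ and $F_{j-1}$ have been constructed with $\IP{T\1_{E_{j-1}}}{\1_{F_{j-1}}} \geq 2^{-(j-1)}\alpha$. Using the adjoint relation $\IP{T\1_E}{\1_F} = \IP{T^*\1_F}{\1_E}$, I would first define
\[
E_j := \{\, y \in E_{j-1} : T^*\1_{F_{j-1}}(y) \geq 2^{-j}\alpha/|E_0| \,\},
\]
which secures \eqref{estimate:right_mean} at step $j$ by construction; a Markov estimate bounds the pairing lost to the complement by $2^{-j}\alpha/|E_0| \cdot |E_{j-1}\setminus E_j| \leq 2^{-j}\alpha$. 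Next, I would set
\[
F_j := \{\, x \in F_{j-1} : T\1_{E_j}(x) \geq 2^{-j}\alpha/|F_0| \,\}
\]
to secure \eqref{estimate:left_mean}, and bound the further loss symmetrically. Importantly, the mean bounds at indices $i < j$ are automatically preserved, since $E_j \subset E_i$ and $F_j \subset F_i$ only shrink the sets on which the stated inequalities must hold.

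The main obstacle is bookkeeping the constants in \eqref{estimate:right_density}: the per-step losses must sum into a geometric series whose total is less than $(1-2^{-j})\alpha$ if the retention bound $\IP{T\1_{E_j}}{\1_{F_j}} \geq 2^{-j}\alpha$ is to survive. This typically requires tightening the thresholds by a harmless absolute factor (e.g.\ using $c \cdot 2^{-j}$ with $c<1$) so that the cumulative losses form a convergent geometric series bounded by $(1 - 2^{-j})\alpha$. Once the thresholds are fixed, the argument is purely combinatorial, using only the adjoint identity and Markov's inequality; the $\ell^2$-boundedness of $T$ is invoked only to ensure that $T^*$ is well-defined on finitely supported characteristic functions, and the finiteness of $E_0, F_0$ ensures every quantity in sight is a finite sum so the refinement procedure can be iterated $J$ times without any convergence issues.
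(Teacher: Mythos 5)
Your proposal follows essentially the same route as the paper: alternately refine $E$ then $F$ by thresholding $T^*\1_{F_{j-1}}$ and $T\1_{E_j}$ at the stated levels, control the pairing lost at each half-step by a Markov/pigeonhole count, and close the induction to get \eqref{estimate:right_density}. Your remark about tightening the thresholds by a harmless absolute factor is apt rather than optional --- with the thresholds exactly as stated the two per-step losses only yield positivity of $\IP{T\1_{E_{j+1}}}{\1_{F_{j+1}}}$ rather than the claimed factor $2^{-(j+1)}$, a bookkeeping point the paper's own write-up glosses over --- but since only the shape of the constants matters in the application (the iteration runs for a bounded number $J=s$ of steps), this is the same argument.
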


\begin{proof}
The argument is pigeonholing and induction. 
We iteratively define our sets $E_j, F_j$ with the base case $j=0$ being the initial sets $E_0, F_0$ in the statement of our theorem. 
By induction, having defined $E_j$, define the next pair of sets $E_{j+1}, F_{j+1}$ successively as 
\begin{align*}
& E_{j+1} := \left\{ 
\vof{y} \in E_j : 
T^*\1_{F_j}(\vof{y}) \geq 2^{-(j+1)} \frac{\IP{T\1_{E_0}}{\1_{F_0}}}{|E_0|} 
\right\}
\and \\ & 
F_{j+1} := \left\{ 
\vof{x} \in F_j : 
T^*\1_{E_{j+1}}(\vof{x}) \geq 2^{-(j+1)} \frac{\IP{T\1_{E_0}}{\1_{F_0}}}{|E_0|} 
\right\}
.\end{align*}
The bounds \eqref{estimate:left_mean} and  \eqref{estimate:right_mean} are satisfied by definition of the sets. 
It remains to show that the sets intersect well enough in the sense of \eqref{estimate:right_density}. 

The estimate \eqref{estimate:right_density} is clearly true for $j=0$; this serves as our base case for an induction. 
Assume by induction that \eqref{estimate:right_density} holds for some $j \in [J]$; we show that it holds for $j+1$ as well. 
First observe that our induction hypothesis implies that 
\begin{align*}
\IP{T\1_{E_{j}}}{\1_{F_{j} \setminus F_{j+1}}} 
= 
\sum_{\vof{x} \in F_{j} \setminus F_{j+1}} T\1_{E_{j}}(\vof{x}) 
< 
2^{-j} \frac{\IP{T\1_{E_0}}{\1_{F_0}}}{|F_0|} \cdot |F_{j} \setminus F_{j+1}| 
< 
2^{-j} \IP{T\1_{E_0}}{\1_{F_0}} 
.\end{align*}
This implies that 
\[
\IP{\1_{E_{j}}}{T^*\1_{F_{j+1}}} 
= 
\IP{T\1_{E_{j}}}{\1_{F_{j+1}}} 
\geq 
2^{-j} \IP{T\1_{E_0}}{\1_{F_0}} 
,\]
which is not quite what we want. 
We apply the same analysis to the other side of the inner product now: 
\begin{align*}
\IP{\1_{E_{j} \setminus E_{j+1}}}{T^*\1_{F_{j+1}}} 
= 
\hspace{-3mm}\sum_{\vof{y} \in E_{j} \setminus E_{j+1}} \hspace{-3mm}T^*\1_{F_{j+1}}(\vof{y}) 
< 
2^{-(j+1)} \frac{\IP{T\1_{E_0}}{\1_{F_0}}}{|E_0|} |E_{j} \setminus E_{j+1}| 
< 
2^{-(j+1)} \IP{T\1_{E_0}}{\1_{F_0}} 
.\end{align*}
This implies \eqref{estimate:right_density} as desired. 
\end{proof}

Now let us interpret the above discussion for our averages. For this I recall our setting. 
Fix a set $\myset \subseteq [N]$ where $N\in\N$. 
Let $\gamma = ( \phi_1,\dots,\phi_{\numpolys} ) \subset \Z[X]$ be a finite, separated system of polynomials of total degree $D_{\gamma}$. 
Fixing our curve $\gamma$, we drop our dependence of it in the notation. 
In our arguments I find it easier to work with the un-normalized operators 
\[
\unavg_{\myset}f(\vof{x}) 
:= 
|\myset| \cdot \avg_{\myset}f(\vof{x}) 
= 
\sum_{n\in\myset} f(x_1+\phi_1(n),\dots,x_{\numpolys}+\phi_{\numpolys}(n)).
\]
%for which we seek to prove the inequalities 
%\begin{equation}
%\IP{\unavg_{\myset} \1_{E}}{\1_F}^{2s-1} 
%\lesssim_\epsilon 
%N^{\epsilon+s-1} |E|^{s} |F|^{s} 
%\for 1 \leq s \leq D.
%\end{equation}
We will also work with the adjoint of $\unavg_{\myset}$ which is 
\[
\unavg^{*}_{\myset}f(\vof{x}) 
:= 
\sum_{n\in\myset} f(x_1-\phi_1(n),\dots,x_{\numpolys}-\phi_{\numpolys}(n))
\]
Thus, $(\unavg^{\gamma}_{\myset})^* = \unavg^{-\gamma}_{\myset}$. 
In other words, 
\begin{align*}
& \unavg_{\myset}\1_{E}(\vof{x}) 
= 
|\{ n \in \myset : \vof{x}+\gamma(n) \in E \}| 
\; \and \;
\\ & 
\unavg^{*}_{\myset}\1_{F}(\vof{x}) 
= 
|\{ n \in \myset : \vof{x}-\gamma(n) \in F \}| 
\end{align*}
for any $\vof{x} \in \Z^k$ and any subsets $E,F$ of $\Z^k$. 
We reinterpret Lemma~\ref{lemma:flowing} by introducing the means 
\begin{align*}
& \alpha := |F|^{-1} \IP{\unavg_{\myset}\1_E}{\1_F} 
\and 
\\ & \beta := |E|^{-1} \IP{\unavg_{\myset}\1_E}{\1_F} = |E|^{-1} \IP{\1_E}{\unavg^{*}_{\myset}\1_F} 
\end{align*}
so that, by \eqref{estimate:restricted_weak}, \eqref{estimate:bound_avg_by_paucity} is equivalent to proving that 
\begin{equation}\label{estimate:restricted_weak:unnormalized}
\alpha^{s} \beta^{s-1} 
\lesssim_{\gamma,s} 
\left[ |\myset|^{s-1} + |\myset| \cdot \maxnumreps^{\gamma}_{s-1}(\myset,[N]) \right] |E|
\end{equation}
for any finite subset $E$ in $\Z^k$. 

Now, let us make a few important observations before initiating the method of refinements. 
Define the measure 
\[ 
\mu^{\gamma}_{\myset} 
:= 
\sum_{n\in\myset} \delta_{\gamma(n)} 
.\]
This is not quite the same as $\1_{\{\gamma(n):n\in\myset\}}$ because the curve may take the same value more than once. However, they are comparable since the curve can only intersect itself finitely many time. 
This measure is related to our averages by  $\unavg^{\gamma}_{\myset} f = \mu^{\gamma}_{\myset} \star f$ where $\star$ denotes the convolution of two functions on $\Z^{\numpolys}$. 
I will use the following two bounds
\begin{align*}
\| \mu^{\gamma}_{\myset} \|_{\ell^1} = |\myset|
\quad \and \quad 
\| \mu^{\gamma}_{\myset} \|_{\ell^\infty} \leq D_\gamma
.\end{align*}

Observe that 
\(
0 \leq \alpha, \beta \leq |\myset|
\)
since 
\[
\|\unavg_{\myset}\|_{\ell^\infty \to \ell^\infty} = \|\unavg^{*}_{\myset}\|_{\ell^\infty \to \ell^\infty} = \| \mu^{\gamma}_{\myset} \|_{\ell^1} = |\myset|
.\] 
Hence, it suffices to assume that $\alpha \geq C$ for some constant $C>0$ depending only on the curve $\gamma$ and on $s$. 
Otherwise, we deduce that $\alpha^s \beta^{s-1} \leq C^s \times |\myset|^{s-1} \lesssim |\myset|^{s-1} \leq |\myset|^{s-1} |E|$ as desired in \eqref{estimate:restricted_weak:unnormalized}. 
Here, we used the discrete property that $|E| \geq 1$ for any nonempty subset $E$ of $\Z^k$. 
Similarly, we have 
\[
\alpha 
\leq \| \unavg_{\myset}\1_{E} \|_{\ell^\infty} 
\leq \| \mu^{\gamma}_{\myset} \|_{\ell^\infty} \cdot \| \1_{E} \|_{\ell^1} 
\leq D_{\gamma} |E|,
\]
and if $\beta \leq C$ for some positive constant $C$, then $\alpha^s \beta^s \leq C^{s-1} D_{\gamma} |\myset|^{s-1} |E|$ so that \eqref{estimate:restricted_weak:unnormalized} is satisfied. 

Let $K_\gamma = \deg{\phi_1} \cdots \deg{\phi_\numpolys}$. 
Henceforth, we choose our constant $C$ to be $2^{s+1}(s+1)K_{\gamma}$ so that we are assuming $\alpha, \beta \geq 2^{s+1}(s+1)K_{\gamma}$. 
Since $\alpha \leq |\myset|$, it suffices to prove that 
\begin{equation}\label{estimate:restricted_weak:unnormalized:large_means}
\alpha^{s-1} \beta^{s-1} 
\lesssim_{\gamma,s} 
\maxnumreps^{\gamma}_{s-1}(\myset,[N]) |E| 
\quad \text{provided that} \quad 
\alpha,\beta \geq 2^{s+1}(s+1)K_{\gamma}. 
\end{equation}
Moreover, since $\alpha$ is positive, \eqref{estimate:right_density} is positive and therefore the sets $E_s$ and $F_s$ have positive measure. 
In particular, $E_s$ and $F_s$ are non-empty.

% --
\subsection{The initial tower of parameters}
% --

From Lemma~\ref{lemma:flowing} we have sets $E_s \subset \dots \subset E_1 \subset E_0 := E$ and $F_s \subset \dots \subset F_1 \subset F_0 := F$ such that 
\begin{align*}
& \unavg_{\myset}\1_{E_{j}}(\vof{x}) \geq \alpha/2^j 
\for \vof{x} \in F_j, 
\and
\\ 
& \unavg^{*}_{\myset}\1_{F_{j-1}}(\vof{y}) \geq \beta/2^j 
\for \vof{y} \in E_j 
\end{align*}
for each $j \in [s]$. 
Since $\alpha,\beta > 0 $, we have that $E_s$ and $F_s$ are non-empty and therefore each set contains at least one element. 
Let $\vof{y}$ be a fixed element of $E_s$ and define the sets 
\begin{align*}
& B_1 
:= 
\{ n_1 \in \myset : \vof{y}-\gamma(n_1) \in F_{s-1} \}
\and \\ & 
A_{1}
:= 
\{ (m_1,n_1) \in \myset \times B_1 : \vof{y}-\gamma(n_1)+\gamma(m_1) \in E_{s-1} \} 
.\end{align*} 
We suppress the element $\vof{y}$ from the notation in these sets because the particular choice of $\vof{y}$ is not relevant in our analysis; merely its existence is sufficient. 
By \eqref{estimate:right_mean} we have that $|B_1| \geq \beta/2^s$ and by \eqref{estimate:left_mean} we have that 
\[
| \{ m_1 \in \myset : \vof{y}-\gamma(n_1)+\gamma(m_1) \in E_{s-1} \} | 
\geq \alpha/2^s
\]
for each $n_1 \in B_1$. 
Therefore, 
\(
|A_1|
\geq (\alpha/2^{s}) (\beta/2^{s}) 
= \alpha \beta / 2^{2s} 
.\)

Having defined the sets $A_1$ and $B_1$, we inductively define $B_t$ and $A_t$ for $t>1$ as follows. 
Assume that $B_t$ and $A_t$ are defined and write $\vof{m} = (m_1,\dots,m_t)$ and $\vof{n} = (n_1,\dots,n_t)$; define 
\begin{align*}
B_{t+1}
& := 
\big\{ (\vof{m};\vof{n},n_{t+1}) \in A_t \times \myset : \vof{y}- \sum_{i=1}^{t+1} \gamma(n_i)+\sum_{j=1}^t \gamma(m_j) \in F_{s-t-1} 
\big\} 
\and \\ 
A_{t+1} 
& := 
\big\{ (\vof{m},m_{t+1};\vof{n},n_{t+1}) \in \myset^{t+1} \times \myset^{t+1} : \vof{y}-\sum_{i=1}^{t+1} \gamma(n_i)+\sum_{j=1}^{t+1} \gamma(m_j)  \in E_{s-t-1} 
\\ & \qquad \quad 
\and 
(\vof{m};\vof{n},n_{t+1}) \in B_{t+1}
\big\} 
.\end{align*}
Similar to $A_1$ and $B_1$, we have that 
\[
|A_t| 
\geq 
2^{-2(s+s-1+\cdots+t)} (\alpha \beta)^t 
\quad \for \quad 
t \in [s]
.\]
% for each $t \in \N$. 

At this stage bounding $|A_s|$ from above would yield an upper bound for $(\alpha \beta)^s$, which is our goal. 
However, such a bound on the number of solutions to the underlying system of equations is too weak to the yield sharp $\ell^p$-improving bounds that we desire. 
Let us examine this approach in the case $s=2$, where we flow twice back and forth, and $\gamma = (X^2)$. 

For each $y \in E_2$, the above discussion says that 
\[
A_2 
:= 
|\{ (m_1,n_1,m_2,n_2) \in \myset^4 : y+\phi(m_1)-\phi(n_1)+\phi(m_2)-\phi(n_2) \in E \}| 
\geq 
% (\beta/4)(\alpha/4)(\beta/2)(\alpha/2) 
% = 
\alpha^2 \beta^2 /64.
\]
Unfortunately, for each point $y \in E_2$, we have that 
\(
A_2 
% |\{ (m_1,n_1,m_2,n_2) \in \myset^4 : y+\phi(m_1)-\phi(n_1)+\phi(m_2)-\phi(n_2) \in E \}| 
\geq |\myset|^{2}
\)
which does not satisfy \eqref{estimate:restricted_weak:unnormalized} when, say, $|\myset|^{1/3} \leq \beta, |E| < 2|\myset|^{1/3}$. 
To justify our claim, take $y \in E_2$. 
Note that we also have $y \in E$. 
Consequently, the set 
\[
\{ (m_1,n_1,m_2,n_2) \in \myset^4 : y+\phi(m_1)-\phi(n_1)+\phi(m_2)-\phi(n_2) = y \}
\]
is a subset of the one above defining $A_2$, and it has cardinality at least $|\myset|^2$ by considering the diagonal solutions $m_1=n_1$ and $m_2=n_2$ in $\myset$. 
% Therefore, 
% \begin{align*}
% |\{ & (m_1,n_1,m_2,n_2) \in \myset^4 : y+\phi(m_1)-\phi(n_1)+\phi(m_2)-\phi(n_2) \in E \}| 
% \\ & \geq 
% |\{ (m_1,n_1,m_2,n_2) \in \myset^4 : y+\phi(m_1)-\phi(n_1)+\phi(m_2)-\phi(n_2) = y \}| 
% \\ & \geq |\myset|^2.
% \end{align*}

% % 
\subsection{The pruned tower of parameters}
% % 

In order to overcome the difficulty discussed in the previous section, I will prune the sets $A_t$ and $B_t$ to avoid diagonal solutions and other special solutions. At each stage of the pruning, an essential feature is that the remaining `generic' solutions outweigh the pruned `special' solutions. This pruning potentially allows for sharper $\ell^p$-improving estimates. 
There is added flexibility in this approach then used below. For instance, one can add more conditions beyond $\phi(m_{t+1}) \neq \phi(n_j)$ for all $j\in[t]$ such as assuming that $|\phi(m_i)|$ and $|\phi(n_i)|$ are at least size $C$ for some fixed finite constant and all $i\in[t]$, or $\vof{m}$ does not lie in some subvariety of positive codimension.

We revisit the base case in our construction of the tower of sets $B_1, A_1, B_2, A_2, \dots, B_s, A_s$ and partition the set $A_1$ into \emph{generic} solutions $A_1^{\genericsols}$ and \emph{special} solutions $A_1^{\specialsols}$ by defining these pieces as 
\begin{align*}
& 
A_{1}^{\genericsols}
:= 
\{ (m_1,n_1) \in A_1 : \phi(m_1) \neq  \phi(n_1) \for \all \phi \in \gamma \} 
\and 
\\ & 
A_{1}^{\specialsols} 
:= 
\{ (m_1,n_1) \in A_1 : \phi(m_1) = \phi(n_1) \for \text{ some } \phi \in \gamma \} 
.\end{align*}
We will not split the set $B_1$ into any components. 
Therefore, $|B_1| \geq \beta/2^s$ while 
\[ 
|A_1| 
= |A_1^{\genericsols}|+|A_1^{\specialsols}| 
\geq 2^{-s}\alpha |B_1| 
\geq 2^{-2s}\alpha \beta
.\]
Making use of our assumption that $\alpha,\beta \geq 2^{s+1}(s+1)K_{\gamma} = 2^3 K_{\gamma}$,  I will show that $|A_1^{\genericsols}| \geq |A_1^{\specialsols}|$. 
To see this observe that for each $\phi \in \gamma$ the Fundamental Theorem of Algebra implies that 
\[
| \{ m_1 \in \Z : \phi(m_1) = \phi(n_1) \} | \leq \deg(\phi)
\]
holds for each fixed $n_1 \in \myset$. 
Applying this estimate across all $\phi \in \gamma$, we deduce that 
\[
| \{ m_1 \in \myset : 
% \vof{y}-\gamma(n_1)+\gamma(m_1) \in E_{s-1} \and 
\phi(m_1) = \phi(n_1) \for\text{some } \phi \in \gamma \} | 
\leq \big( \prod_{\phi\in\gamma} \deg{\phi} \big)
= K_{\gamma}
\]
for each $n_1 \in \myset$. 
Consequently, $|A_1^{\specialsols}| \leq K_{\gamma} |B_1|$ which implies that 
$|A_1^{\genericsols}| \geq (2^{3-1}-1)|A_1^{\specialsols}|$ and 
\( |A_1^{\genericsols}| \geq \alpha \beta / 2^{2s+1}. \)

% At this stage we distinguish between the cases $s=2$ and $s>2$. 
We now conclude the proof of Theorem~\ref{theorem:refinements} for the case $s=2$, and will return to $s>2$ in a moment. 
Assume that $s=2$. 
Our discussion above gives sets $B_1, A_1^{\genericsols}, A_1^{\specialsols}$ as defined above with the property that $|A_1^{\genericsols}| \geq \alpha \beta /2^5$. 
Our claim here is that $|A_1^{\genericsols}| \leq \maxnumreps^{\gamma}_{1}(\myset,[N]) |E|$ from which Theorem~\ref{theorem:refinements} immediately follows. 
By the union bound it suffices to show that 
\[
| \{ (m_1,n_1) \in A_1^{\genericsols} : \vof{y}-\gamma(n_1)+\gamma(m_2) = \vof{z} \} | 
\leq 
\maxnumreps^{\gamma}_{1}(\myset,[N]) 
\]
for each $\vof{z} \in E$. 
The key fact here is that our restriction $\phi(m_1) \neq  \phi(n_1)$ for $(m_1,n_1) \in A_1^{\genericsols}$ and $\phi \in \gamma$ \emph{prohibits} ${y}_i = {z}_i$ for each coordinate. 
Our claim now follows from the definition \eqref{def:maxnumreps} of $\maxnumreps^{\gamma}_{1}(\myset,[N])$. 

We return to the general case and now assume that $s>2$. 
Having defined the sets $B_1$, $A_1^{\genericsols}$ and $A_1^{\specialsols}$ above, we inductively define $B_t^{\genericsols}$, $B_t^{\specialsols}$, $A_t^{\genericsols}$ and $A_t^{\specialsols}$ for $1<t<s$ as follows. 
First, take $B_1^{\genericsols} = B_1$ so that $B_1^{\specialsols}$ is the empty set. 
Assume that $t>1$ and $B_t^{\genericsols}$, $B_t^{\specialsols}$, $A_t^{\genericsols}$ and $A_t^{\specialsols}$ are defined, and write $\vof{m} = (m_1,\dots,m_t)$ and $\vof{n} = (n_1,\dots,n_t)$. 
Define the generic sets 
\begin{align*}
B_{t+1}^{\genericsols}
& := 
\big\{ (\vof{m};\vof{n},n_{t+1}) \in B_t : (\vof{m};\vof{n}) \in A_t^{\genericsols} \and  \phi(n_{t+1}) \neq  \phi(m_{j}) 
\for j \in [t] \and \phi \in \gamma
\big\}, 
\\  
A_{t+1}^{\genericsols} 
& := 
\big\{ (\vof{m},m_{t+1};\vof{n},n_{t+1}) \in A_{t+1} : (\vof{m};\vof{n},n_{t+1}) \in B_{t+1}^{\genericsols}, 
% \\ & \qquad \quad 
\phi(m_{t+1}) \neq  \phi(n_{j}) 
\for j \in [t] \and \phi \in \gamma
% \\ & \qquad \quad \and 
% \{ \gamma(m_1),\dots,\gamma(m_{t+1}) \}; \{ \gamma(n_1),\dots,\gamma(n_{t+1}) \} \; \text{are distinct multi-sets} \} 
\\ & \qquad \quad 
\and 
\gamma(m_1)+\dots+\gamma(m_{t+1}) \neq   \gamma(n_1)+\dots+\gamma(n_{t+1})
\big\} 
.\end{align*}
We define the special solutions as the complement of the generic ones; that is, 
\begin{align*}
B_{t+1}^{\specialsols}
& := 
\big\{ (\vof{m};\vof{n},n_{t+1}) \in B_t : (\vof{m};\vof{n}) \in A_t^{\genericsols} 
\big\} \setminus B_{t+1}^{\genericsols}, 
\\  
A_{t+1}^{\specialsols} 
& := 
\big\{ (\vof{m},m_{t+1};\vof{n},n_{t+1}) \in A_{t+1} : (\vof{m};\vof{n},n_{t+1}) \in B_{t+1}^{\genericsols}
\big\} \setminus A_{t+1}^{\genericsols} 
.\end{align*}
% 
% We prune $B_{t+1}$ to have relative size $\beta$. 

At each stage we chose $A_t^{\genericsols} \subseteq A_t$ and $B_t^{\genericsols} \subset B_t$. 
Furthermore, the Flowing Lemma applies so that 
\begin{align}
& |A_t^{\genericsols}| + |A_t^{\specialsols}| 
\geq \alpha |B_t^{\genericsols}| / 2^t \label{inequality:A}
\and 
\\ & 
|B_{t+1}^{\genericsols}| + |B_{t+1}^{\specialsols}| \label{inequality:B} 
\geq \beta |A_t^{\genericsols}| / 2^t
\end{align}
for $t \in [s]$. 
The union bound implies 
\begin{equation}
|A_t^{\genericsols}| 
\leq \maxnumreps^{\gamma}_{t}(\myset,[N]) \times |E_{s-t}|
.\end{equation}
Therefore, the proof of our theorem is finished if we show that $|A_t^{\genericsols}| \gtrsim (\alpha \beta)^t$ for $t \in [s]$. 
In other words, it remains to show that $A_t^{\genericsols}$ is a sufficiently large subset of $A_t$. 
The inequalities \eqref{inequality:A} and \eqref{inequality:B} imply that it suffices to show that $|A_t^{\genericsols}| \geq |A_t^{\specialsols}|$ and $|B_t^{\genericsols}| \geq |B_t^{\specialsols}|$. 
Using our restriction $\alpha, \beta \geq 2^{s+1}(s+1)K_{\gamma}$, this will be an immediate consequence of the following proposition which says that our special sets are sufficiently small. 
\begin{proposition}
For each $t \in \N$ we have the bounds 
\begin{align}%\label{estimate:absorb_diagonal}
& |B_{t+1}^{\specialsols}| 
\leq t K_{\gamma} |A_{t}^{\genericsols}| 
\quad \and \quad 
\\ & 
|A_{t}^{\specialsols}| 
\leq (t+1) K_{\gamma} |B_{t}^{\genericsols}| 
% \for t \in \N 
.\end{align}
\end{proposition}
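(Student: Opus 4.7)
The plan is to bound each special set by fixing the tuple from the previous stage of the tower and counting the bad choices for the newly adjoined variable, using the Fundamental Theorem of Algebra together with a union bound over the forbidden polynomial relations. Both inequalities follow from the same template as the base case $t=1$ already worked out in the previous subsection; throughout, the flowing-lemma containment conditions appearing in the definitions of $A_t$ and $B_t$ may safely be dropped when passing to upper bounds, since this only enlarges the set being counted. The counting then reduces entirely to enumerating integer solutions of univariate polynomial equations.

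For the first inequality, fix $(\vof{m}; \vof{n}) \in A_t^{\genericsols}$. An extension $(\vof{m}; \vof{n}, n_{t+1})$ lies in $B_{t+1}^{\specialsols}$ precisely when $n_{t+1} \in \myset$ violates the genericity requirement, i.e.\ $\phi(n_{t+1}) = \phi(m_j)$ for some $j \in [t]$ and some $\phi \in \gamma$. For each fixed pair $(j,\phi)$, the Fundamental Theorem of Algebra caps the number of such $n_{t+1} \in \Z$ at $\deg(\phi)$; a union bound over the $t$ admissible indices $j$ and over $\phi \in \gamma$ then bounds the number of bad $n_{t+1}$ by $t K_\gamma$, exactly as in the base case. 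Summing this pointwise estimate over $(\vof{m}; \vof{n}) \in A_t^{\genericsols}$ gives the first claim.

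The second inequality is argued similarly but with one extra contribution coming from the sum condition. Fix a tuple in $B_t^{\genericsols}$; an extension to $A_t^{\specialsols}$ adjoins some $m \in \myset$ that violates at least one of the forbidden relations in the definition of $A_t^{\genericsols}$. Violations of the form $\phi(m) = \phi(n_j)$ for some $j$ and $\phi$ contribute at most $t K_\gamma$ bad values by the same union bound used above. The remaining violation, namely the sum equation $\sum_i \gamma(m_i) = \sum_i \gamma(n_i)$, when projected onto the first coordinate becomes a single polynomial equation $\phi_1(m) = c$ for a constant $c$ determined entirely by the fixed data, so by the Fundamental Theorem of Algebra it contributes at most $\deg(\phi_1) \leq K_\gamma$ further bad values. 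Adding the two contributions and summing over $B_t^{\genericsols}$ yields the claimed bound $(t+1) K_\gamma |B_t^{\genericsols}|$. The only real obstacle is keeping the indices straight between the generic and special stages; the analytic content is elementary, with the Fundamental Theorem of Algebra as the sole input.
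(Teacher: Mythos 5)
Your argument is correct and is essentially the paper's own proof: fix the tuple from the previous stage, count the bad choices of the newly adjoined variable via the Fundamental Theorem of Algebra and a union bound over $j\in[t]$ and $\phi\in\gamma$, with the extra $+1$ in the second bound coming from the sum condition, which you handle (as the paper implicitly does) by projecting onto a single coordinate to get one more univariate equation. The only cosmetic difference is that you spell out the fiber-counting and the projection step slightly more explicitly than the paper does.
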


\begin{proof}%[Proof of Proposition~\ref{}]
Fix $t \in \N$. 
I start with the first inequality. 
For each $(\vof{m};\vof{n},n_{t}) \in B_t^{\specialsols}$, I show that there are at most $t D$ points in $A_t^{\genericsols}$ giving rise to $(\vof{m};\vof{n},n_{t})$. 
Here, I am writing $(\vof{m}, \vof{n})$ for points in $A_{t-1}^{\genericsols}$; each are vectors with $t-1$ coordinates. 
If $t=1$, then these points do not exist and I simply mean $(n_1)$. 
By definition, $(\vof{m};\vof{n})$ is in $A_t^{\genericsols}$, so it suffices to bound the possible number of $n_t \in \myset$ such that $\phi(n_t) = \phi(m_j)$ for some $j \in [t]$ and $\phi \in \gamma$. 
By the Fundamental Theorem of Algebra, for each $\phi \in \gamma$, there are at most $\deg(\phi)$ values, say $x$ such that $\phi(x) = y$ for each $y \in \Z$. 
This gives the first inequality. 

The bound for the second inequality follows similarly. 
The one caveat is that we have an extra possible condition to consider when $\gamma(m_1)+\dots+\gamma(m_{t}) =  \gamma(n_1)+\dots+\gamma(n_{t})$. 
Hence, $t+1$ in place of $t$. 
\end{proof}

% % % 
\section{The paucity estimates}\label{section:paucity}
% % % 

We define the function 
\[ 
\mathcal{L}(c,X) := \exp\bigg(c\frac{\log{X}}{\log{\log{X}}}\bigg)
.\]
By taking logarithms it is easy to see that $\mathcal{L}(c,X)$ is sub-polynomial, meaning that for all $\epsilon>0$, there exists a constant $C_{c}(\epsilon)$ depending on $c$ and $\epsilon$ such that 
\begin{equation}
\mathcal{L}(c,X) 
\leq C_{c}(\epsilon) X^\epsilon
\end{equation}
for all sufficiently large, positive $X$. 
The function $\mathcal{L}(c,X)$ naturally arises in our argument because we will need to control the number of divisors at various times. For this, we use the classical divisor bound. 
\begin{proposition}[Divisor bound]
For each $s\in\N$, there exists a constant $c_{s}$ such that if $M\in\N$, then 
\begin{equation}
|\{d_1,\dots,d_s \in \N : d_1 \cdots d_s = M\}| 
\leq \mathcal{L}(c_{s},M)
.\end{equation}
\end{proposition}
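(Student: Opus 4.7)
Denote by $d_s(M) := |\{(d_1,\dots,d_s) \in \N^s : d_1 \cdots d_s = M\}|$ the $s$-fold divisor function; the proposition asserts $d_s(M) \leq \mathcal{L}(c_s, M)$. The plan is to induct on $s$, treating $s = 1$ as vacuous (since $d_1 \equiv 1$, we may take $c_1 = 0$) and using the classical Wigert-type estimate as the base case $s = 2$. The inductive step will then exploit a Dirichlet-convolution identity to deduce the bound for $s$ from those for $s = 2$ and $s - 1$.

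For the base case $s = 2$, I would write $M = \prod_p p^{e_p}$ and use the multiplicativity of $d = d_2$ to obtain $\log d(M) = \sum_{p \mid M} \log(e_p + 1)$. Fix a threshold $y \geq 2$ and split the primes dividing $M$ into small primes $p \leq y$ and large primes $p > y$. The small primes contribute at most $\pi(y)\log(1 + \log_2 M) \ll (y/\log y)\log\log M$. For a large prime, $p^{e_p} \leq M$ forces $e_p \leq \log M/\log y$, and the number of such primes is likewise bounded by $\log M/\log y$; together these yield a contribution of order $(\log M/\log y)\log\log M$. Choosing $y = \log M$ balances the two estimates and produces $\log d(M) \ll \log M/\log\log M$, i.e.\ $d(M) \leq \mathcal{L}(c_2, M)$ for some absolute constant $c_2 > 0$.

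For $s \geq 3$, I would use the Dirichlet-convolution identity
\[
d_s(M) = \sum_{d \mid M} d_{s-1}(d),
\]
which holds because every ordered factorization $M = d_1 \cdots d_s$ is uniquely determined by the choice of $d_s \mid M$ together with an ordered factorization of $M/d_s$ into $s - 1$ parts. Combining this with the induction hypothesis and the base case $s = 2$ gives
\[
d_s(M) \leq d(M) \cdot \max_{d \mid M} d_{s-1}(d) \leq \mathcal{L}(c_2, M)\,\mathcal{L}(c_{s-1}, M) \leq \mathcal{L}(c_2 + c_{s-1}, M),
\]
where I use that $X \mapsto \mathcal{L}(c, X)$ is non-decreasing for $X$ sufficiently large, with the finitely many small values of $M$ absorbed into the constant. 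Setting $c_s := c_2 + c_{s-1}$ closes the induction.

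The main obstacle is the Wigert estimate for $s = 2$, which is the only genuine analytic input and requires the balancing argument above; once it is in hand, the inductive step is a formal one-line manipulation.
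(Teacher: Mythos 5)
The paper itself gives no proof of this proposition -- it is invoked as ``the classical divisor bound'' -- so your proposal has to stand on its own, and its inductive superstructure does: the identity $d_s(M)=\sum_{d\mid M}d_{s-1}(d)$, the bound $d_s(M)\le d(M)\max_{d\mid M}d_{s-1}(d)$, and $\mathcal{L}(c_2,M)\,\mathcal{L}(c_{s-1},M)=\mathcal{L}(c_2+c_{s-1},M)$ are all fine (modulo the usual caveat, present already in the paper's statement, that small $M$ with $\log\log M\le 0$ must be absorbed into a constant). The genuine gap is in the base case $s=2$, which you yourself identify as the only real analytic input. With the two estimates you state -- small primes contributing $\ll(y/\log y)\log\log M$ and large primes contributing $\ll(\log M/\log y)\log\log M$ -- the choice $y=\log M$ does \emph{not} give $\log d(M)\ll\log M/\log\log M$: both terms become $\asymp(\log M/\log\log M)\cdot\log\log M=\log M$, so you only recover the trivial bound $d(M)\le M^{O(1)}$. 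Worse, no choice of $y$ rescues these two bounds, since they balance exactly at $y\asymp\log M$ with common value $\asymp\log M$; the loss comes from estimating each large-prime factor $\log(e_p+1)$ by $\log\log M$, which is too wasteful.

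The standard repair is to treat the large primes via $e_p+1\le 2^{e_p}$, so that $\sum_{p>y}\log(e_p+1)\le(\log 2)\sum_{p>y}e_p\le(\log 2)\log M/\log y$ with no $\log\log M$ factor, and then to take $y$ slightly below $\log M$, say $y=\log M/(\log\log M)^2$: the large primes then contribute $\ll\log M/\log\log M$ while the small primes contribute $\ll\pi(y)\log\log M\ll\log M/(\log\log M)^2$, giving Wigert's bound $\log d(M)\ll\log M/\log\log M$ as claimed. With that correction your induction closes. You could also bypass the induction entirely: $d_s$ is multiplicative with $d_s(p^e)=\binom{e+s-1}{s-1}\le(e+1)^{s-1}$, so $d_s(M)\le d(M)^{s-1}$ and one may simply take $c_s=(s-1)c_2$, which is a slightly cleaner way to reduce everything to the $s=2$ case.
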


Fix $\gamma(T) := (\phi_1(T),\dots,\phi_{\numpolys}(T))$ to be a separated polynomial curve. 
In order to prove Theorem~\ref{theorem:paucity}, it suffices to show that there exists a positive constant $c_\gamma$, depending on the curve $\gamma$, such that for each $\vof{a} \neq \vof{0}$ we have the estimate 
\begin{equation}\label{estimate:paucity}
\numreps_{\numpolys}^{\gamma}(\myset,[X],\vof{a})| 
\lesssim_{\gamma} 
\mathcal{L}(c_\gamma,X) |\myset \cap [X]|^{r-1}
.\end{equation}
Each estimate will make use of elimination theory from \cite{Wooley:symmetric} and \cite{PW}. I recall what we need from these works in the next subsection.

% % 
\subsection{Elimination theory}
% % 

% Our proof requires some elimination theory which we adapt from \cite{PW}. We recall the necessary information in this subsection. 

For $\numpolys \in \N$, define the Vandermonde determinant 
\[ 
V_\numpolys(X_1,\dots,X_\numpolys) 
:= 
\det(X_j^{i-1})_{i,j\in[\numpolys]}
= 
\prod_{1 \leq i < j \leq \numpolys} (X_j-X_i)
.\]
Call a polynomial $F(X_1,\dots,X_\numpolys) \in \Z[X_1,\dots,X_\numpolys]$ \emph{asymptotically definite} if there exists a $\lambda>0$ such that $|\vof{x}| \geq \lambda$ implies that $|F(\vof{x})| \geq 1$. For such an asymptotically definite polynomial, there are only $\lesssim \lambda^r$ possible solutions to $F(X_1,\dots,X_{\numpolys}) = 0$, and hence, only finitely many solutions. 

\begin{lemma}[Lemma~1 of \cite{PW}]\label{lemma:PW1}
Suppose that $\gamma:=(\phi_1,\dots,\phi_\numpolys)$ is a separated, well-conditioned curve. 
There exists an asymptotically definite polynomial $P_\gamma(X_1,\dots,X_\numpolys)$, of total degree 
\( 
D_\gamma 
%= \sum_{i=1}^\numpolys k_i - \binom{r}{2} 
= \sum_{i=1}^\numpolys (k_i - i) 
,\) 
such that 
\begin{equation}
\det(\phi_i'(x_j))_{i,j\in[\numpolys]} 
= 
V_\numpolys(X_1,\dots,X_\numpolys) \cdot P_\gamma(X_1,\dots,X_\numpolys)
.\end{equation}
\end{lemma}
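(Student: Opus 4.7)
The plan is to establish the factorization by an alternating-polynomial argument, compute the degree by bookkeeping, and then verify asymptotic definiteness as the main substantive content.

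First, observe that the determinant $\det(\phi_i'(X_j))_{i,j\in[\numpolys]}$ is an alternating polynomial in $(X_1,\dots,X_{\numpolys})$: swapping two of the indeterminates $X_j$ corresponds to swapping two columns of the matrix and hence changes the sign of the determinant. By a classical fact in the theory of symmetric functions, every alternating polynomial in $\Z[X_1,\dots,X_{\numpolys}]$ is divisible by the Vandermonde $V_{\numpolys}(X_1,\dots,X_{\numpolys}) = \prod_{i<j}(X_j-X_i)$, and the quotient lies in $\Z[X_1,\dots,X_{\numpolys}]$ and is symmetric. Applying this divisibility yields the factorization
\[
\det(\phi_i'(X_j))_{i,j\in[\numpolys]} = V_{\numpolys}(X_1,\dots,X_{\numpolys}) \cdot P_\gamma(X_1,\dots,X_{\numpolys})
\]
with $P_\gamma\in\Z[X_1,\dots,X_{\numpolys}]$ symmetric.

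Second, the degree claim is a direct count. The $(i,j)$ entry $\phi_i'(X_j)$ has degree $k_i-1$ in $X_j$, so by the Leibniz formula the total degree of $\det(\phi_i'(X_j))$ is $\sum_{i=1}^{\numpolys}(k_i-1)$. Since $\deg V_{\numpolys} = \binom{\numpolys}{2}$, we obtain
\[
\deg P_\gamma = \sum_{i=1}^{\numpolys}(k_i-1) - \binom{\numpolys}{2} = \sum_{i=1}^{\numpolys}(k_i - i),
\]
matching the claim.

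Third, and most delicate, is the asymptotic definiteness of $P_\gamma$. Writing each leading coefficient as $\phi_i(T) = c_i T^{k_i} + \text{l.o.t.}$ with $c_i\neq 0$, the top-degree homogeneous part of $\det(\phi_i'(X_j))$ is $\det(c_i k_i X_j^{k_i-1})_{i,j}$, itself an alternating polynomial of total degree $\sum_i(k_i-1)$. Dividing this leading form by $V_{\numpolys}$ produces the leading homogeneous piece of $P_\gamma$, which up to the nonzero constant $\prod_i c_i k_i$ is the generalized Vandermonde (Schur-type) polynomial $\det(X_j^{k_i-1})/V_{\numpolys}$ associated to the strictly increasing exponent sequence $k_1-1 < \cdots < k_{\numpolys}-1$ provided by the separated hypothesis. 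This Schur-type polynomial has a well-known monomial expansion with nonnegative integer coefficients and a nonzero leading monomial, so the top form of $P_\gamma$ is nondegenerate. The well-conditioned hypothesis on $\gamma$ is precisely the structural assumption that ensures this top form controls $P_\gamma$ globally (ruling out unbounded directions along which lower-order terms could cancel the leading behaviour). Since $P_\gamma$ takes integer values on $\Z^{\numpolys}$ and its leading form is nonvanishing of positive degree, a standard argument bounding a polynomial below by its leading form outside a large compact set yields $|P_\gamma(\vof{x})|\geq 1$ for $|\vof{x}|$ sufficiently large.

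The main obstacle will be making the last step rigorous. The algebraic factorization and the degree computation are essentially formal, but showing that no sequence of integer points escapes to infinity along the zero set of $P_\gamma$ requires controlling the interaction between the explicit Schur-polynomial leading form and the lower-order corrections built from subleading terms of the $\phi_i$. This is where the well-conditioned hypothesis does its real work, and it is the content of Lemma~1 of \cite{PW}.
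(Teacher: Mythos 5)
Your first two steps are correct and are the standard route: alternation of $\det(\phi_i'(X_j))$ under permuting the $X_j$ gives divisibility by $V_{\numpolys}$ with a symmetric integer quotient, and the degree count $\sum_i(k_i-1)-\binom{\numpolys}{2}=\sum_i(k_i-i)$ is right once one notes that the top homogeneous part is $\bigl(\prod_i c_ik_i\bigr)\det(X_j^{k_i-1})$, a nonzero generalized Vandermonde, so no cancellation occurs. The gap is in the third step, which is the entire substance of the lemma. The inference you invoke --- leading form of $P_\gamma$ nonzero, $P_\gamma$ integer-valued, hence ``a standard argument bounding a polynomial below by its leading form'' gives $|P_\gamma(\vof{x})|\geq 1$ outside a compact set --- is false for multivariate polynomials: a merely nonvanishing leading form controls nothing along the directions where that form vanishes. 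Concretely, take $\gamma=(T,T^3)$, which is separated; then $\det(\phi_i'(X_j))=3(X_2^2-X_1^2)=V_2(X_1,X_2)\cdot 3(X_1+X_2)$, so the quotient is $3(X_1+X_2)$. Its leading form is nonzero of the predicted degree $1$ and it takes integer values, yet it vanishes at the integer points $(t,-t)$ of arbitrarily large norm, so it is not asymptotically definite in the sense defined in this paper. This shows that separatedness plus nondegeneracy of the Schur-type top form cannot imply the conclusion: the ``well-conditioned'' hypothesis must enter as a concrete condition and be used. Your proposal never defines it and never uses it beyond calling it ``precisely the structural assumption that ensures this top form controls $P_\gamma$ globally,'' and your final sentence defers exactly this point back to Lemma~1 of \cite{PW}, which makes the argument circular at its only nontrivial step.

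For comparison with the paper: the paper gives no proof of this statement at all --- it is imported verbatim as Lemma~1 of \cite{PW} (and, notably, ``well-conditioned'' is not defined in the present paper either). So a self-contained proof would have to reproduce Parsell--Wooley's definition of a well-conditioned system and their argument that under that hypothesis the quotient $P_\gamma$ is asymptotically definite; whatever form that takes (sign conditions on coefficients, or definiteness exploited only on the positive orthant, where the nonnegativity of the Schur-type expansion you mention can genuinely help --- note that in the application here the variables do lie in $\N$), it is exactly the content your write-up omits. As it stands you have proved the factorization and the degree formula, but not the asymptotic definiteness that Section~\ref{section:paucity} actually relies on.
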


\begin{lemma}[Lemma~2 of \cite{PW}]\label{lemma:PW2}
There exists a (non-zero) polynomial $Q_\gamma(T_1,\dots,T_\numpolys) \in \Z[T_1,\dots,T_\numpolys]$, of degree 
\( 
D_{\gamma,\numpolys} \geq 1
,\) 
such that 
\begin{equation}\label{property:eliminant1}
Q\big( \sigma_{1,\numpolys-1}(\vof{X}'),\dots,\sigma_{\numpolys,\numpolys-1}(\vof{X}') \big)
\equiv 0
,\end{equation}
but 
\begin{equation}\label{property:eliminant2}
Q\big( \sigma_{1,\numpolys}(\vof{X}),\dots,\sigma_{\numpolys,\numpolys}(\vof{X}) \big)
\not\equiv 0
.\end{equation}
\end{lemma}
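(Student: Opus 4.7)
Based on the role this lemma will play in an elimination argument for the paucity bound, I read $\sigma_{j,k}(\vof{X})$ as the natural linearised symmetric function
\[
\sigma_{j,k}(\vof{X}) \;=\; \sum_{i=1}^{k} \phi_j(X_i),
\]
and $\vof{X}' = (X_1,\dots,X_{\numpolys-1})$ as the $(\numpolys-1)$-tuple of variables. The plan is then to package both clauses of the lemma into a single statement about the kernel of the evaluation map
\[
\Phi_k : \Z[T_1,\dots,T_{\numpolys}] \longrightarrow \Z[X_1,\dots,X_k],
\qquad T_j \longmapsto \sigma_{j,k}(\vof{X}).
\]
Property \eqref{property:eliminant1} is exactly the assertion $\ker(\Phi_{\numpolys-1})\neq 0$, and property \eqref{property:eliminant2} is exactly the injectivity of $\Phi_{\numpolys}$. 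So I would set $Q_\gamma$ to be any nonzero element of $\ker(\Phi_{\numpolys-1})$ and verify the two clauses in turn.

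First, I would observe that $\ker(\Phi_{\numpolys-1})\neq 0$ by a pure transcendence-degree count. The codomain $\Z[X_1,\dots,X_{\numpolys-1}]$ has transcendence degree $\numpolys-1$ over $\Q$, so the $\numpolys$ elements $\sigma_{1,\numpolys-1},\dots,\sigma_{\numpolys,\numpolys-1}$ cannot be algebraically independent, and any nontrivial algebraic relation among them furnishes a nonzero $Q\in\ker(\Phi_{\numpolys-1})$. Clearing denominators (multiplying through by leading coefficients) one can take $Q$ to have integer coefficients, and the claimed degree bound $D_{\gamma,\numpolys}\geq 1$ is automatic since $Q$ is nonzero.

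Next, I would verify \eqref{property:eliminant2} by showing that $\sigma_{1,\numpolys},\dots,\sigma_{\numpolys,\numpolys}$ are algebraically independent over $\Q$, which forces $\Phi_\numpolys$ to be injective and hence $\Phi_\numpolys(Q_\gamma)\neq 0$. The Jacobian criterion for algebraic independence reduces this to exhibiting a single point at which the Jacobian matrix
\[
\bigl(\partial\sigma_{j,\numpolys}/\partial X_i\bigr)_{i,j\in[\numpolys]}
\;=\;
\bigl(\phi_j'(X_i)\bigr)_{i,j\in[\numpolys]}
\]
is nonsingular. This is precisely where Lemma~\ref{lemma:PW1} feeds in: its conclusion identifies the determinant of this matrix with $V_{\numpolys}(\vof{X})\cdot P_\gamma(\vof{X})$, and neither factor is the zero polynomial ($V_{\numpolys}$ trivially, and $P_\gamma$ because it is asymptotically definite). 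Specialising to any $\vof{x}\in\Z^\numpolys$ with distinct, sufficiently large coordinates gives a nonvanishing Jacobian, hence the algebraic independence.

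The main obstacle I foresee is not in either clause separately but in pinning down the right interpretation of $\sigma_{j,k}$ and the precise meaning of $D_{\gamma,\numpolys}$, since different conventions in elimination theory give marginally different exponents; the qualitative statement, however, reduces cleanly to a dimension count combined with the Jacobian nonvanishing established in Lemma~\ref{lemma:PW1}.
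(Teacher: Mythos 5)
Your proposal is correct, and your reading of $\sigma_{i,s}$ matches the paper's definition ($\sigma_{i,s}(X_1,\dots,X_s)=\sum_{j=1}^s\phi_i(X_j)$), but your treatment of \eqref{property:eliminant2} takes a genuinely different route from the one sketched in the paper. The paper (following Parsell--Wooley) proves the first clause exactly as you do, by the transcendence-degree count, but obtains the second clause by a minimality argument: choose $Q$ of minimal degree and show that if $Q(\sigma_{1,\numpolys}(\vof{X}),\dots,\sigma_{\numpolys,\numpolys}(\vof{X}))\equiv 0$ held, then differentiating and inverting the matrix $(\phi_i'(X_j))$ (whose determinant is nonzero by Lemma~\ref{lemma:PW1}) would produce a partial derivative of $Q$, of strictly smaller degree, again satisfying the hypotheses --- contradicting minimality. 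You instead prove outright that $\sigma_{1,\numpolys},\dots,\sigma_{\numpolys,\numpolys}$ are algebraically independent via the characteristic-zero Jacobian criterion, so that the evaluation map $\Phi_\numpolys$ is injective and \emph{every} nonzero element of $\ker(\Phi_{\numpolys-1})$ works, with no minimal-degree selection needed. Both arguments ultimately rest on the same input, the non-vanishing of $\det(\phi_i'(X_j))=V_\numpolys\cdot P_\gamma$ from Lemma~\ref{lemma:PW1}; yours is cleaner and gives a slightly stronger conclusion at the cost of invoking the Jacobian criterion as a black box, while the paper's descent argument is essentially a self-contained proof of that criterion in this special case. One small imprecision: the bound $D_{\gamma,\numpolys}\geq 1$ is not automatic merely from $Q\neq 0$ (a nonzero constant has degree $0$); it follows because a nonzero constant cannot lie in $\ker(\Phi_{\numpolys-1})$, so any $Q$ satisfying \eqref{property:eliminant1} is nonconstant. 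Also note that Lemma~\ref{lemma:PW1} is stated for separated, well-conditioned curves, so your appeal to it inherits whatever hypotheses the paper itself assumes there; this is consistent with how the paper uses it and is not an additional gap on your part.
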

\noindent In short, the existence of a polynomial $Q$ satisfying \eqref{property:eliminant1} follows from the fact that $\sigma_{i,\numpolys-1}$ has transcendence degree $\numpolys-1$ which is less than $\numpolys$. The second property \eqref{property:eliminant2} follows from considering $Q$ of minimal degree and showing that if \eqref{property:eliminant2} did not hold then one of the partial derivatives of $Q$ satisfied the hypotheses of the lemma, thereby contradicting the minimal property of $Q$. 

\begin{lemma}[Pages~467-468 of \cite{PW}]\label{lemma:PW3}
There exists a positive constant $C_{\gamma,\numpolys}$ such that for any integers $M_1,\dots,M_\numpolys$ there are at most $C_{\gamma,\numpolys}$ solutions $(n_1,\cdots,n_{\numpolys})$ in $\N^{\numpolys}$ to the equations 
\begin{equation}
M_i = \sum_{j=1}^{\numpolys} \phi_i(n_j)
\quad \for \quad 1 \leq i \leq \numpolys
.\end{equation}
\end{lemma}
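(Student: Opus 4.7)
The plan is to view the system as a polynomial map $\Psi\colon \C^\numpolys \to \C^\numpolys$ with coordinates $\Psi_i(\vof{X}) := \sum_{j=1}^\numpolys \phi_i(X_j) = \sigma_{i,\numpolys}(\vof{X})$ and to show that every fibre $\Psi^{-1}(\vof{M})$ is finite with cardinality bounded by a constant depending only on $\gamma$ and $\numpolys$. Because $\Psi$ is $S_\numpolys$-invariant, it descends through elementary symmetric coordinates to $\tilde\Psi\colon \C^\numpolys \to \C^\numpolys$, and a fibre bound for $\tilde\Psi$ upgrades to one for $\Psi$ at the cost of the harmless combinatorial factor $\numpolys!$.

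First, I would establish generic finiteness of $\tilde\Psi$ via a Jacobian criterion. The Jacobian of $\Psi$ equals $\det(\phi_i'(X_j))$, which by Lemma~\ref{lemma:PW1} factors as $V_\numpolys(\vof{X})\, P_\gamma(\vof{X})$. Since $P_\gamma$ is asymptotically definite it is not identically zero, and $V_\numpolys$ vanishes only on the diagonal locus, so the Jacobian is non-vanishing on a Zariski-dense open subset of $\C^\numpolys$. Hence $\Psi$ is dominant, $\tilde\Psi$ is generically \'etale, and B\'ezout's theorem applied to the complete intersection cut out by $M_i = \sigma_{i,\numpolys}(\vof{X})$ for $1 \leq i \leq \numpolys$ bounds the number of isolated $\C$-solutions by $\prod_{i=1}^{\numpolys} \deg(\phi_i)$, a constant depending only on $\gamma$ and $\numpolys$; this is the bound I want for generic $\vof{M}$.

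The main obstacle is ruling out positive-dimensional components in \emph{exceptional} fibres, since a priori such components could contribute unboundedly many natural-number solutions. This is precisely where Lemma~\ref{lemma:PW2} intervenes. If $\Psi^{-1}(\vof{M})$ contained a component $Z$ of positive dimension, then $Z$ could be parameterised by at most $\numpolys-1$ free parameters, and the restriction of $(\sigma_{1,\numpolys},\dots,\sigma_{\numpolys,\numpolys})$ to $Z$ would therefore factor through a symmetric image living on $\numpolys-1$ variables; pulling back the identity \eqref{property:eliminant1} through this factorisation would then force $Q(\vof{M}) = 0$. By \eqref{property:eliminant2} the latter is a non-trivial polynomial condition on $\vof{M}$, so positive-dimensional fibres can only sit over the hypersurface $\{Q=0\}\subset \C^\numpolys$. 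For $\vof{M}$ off this hypersurface the B\'ezout bound applies directly, while for $\vof{M}$ on it one reduces to a counting problem with fewer variables and inducts on $\numpolys$, applying Lemma~\ref{lemma:PW2} at each step to keep the parameter space non-degenerate.

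The delicate step will be turning this parameterisation heuristic into a clean induction: one must carefully track how the degree $D_{\gamma,\numpolys}$ of the eliminant interacts with the reduced symmetric structure at each stage and bookkeep the accumulated constants so that the final bound depends only on $\gamma$ and $\numpolys$. I expect this induction, powered by Lemmas~\ref{lemma:PW1} and~\ref{lemma:PW2}, to absorb all contributions into a single $C_{\gamma,\numpolys}$ as claimed.
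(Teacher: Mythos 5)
Your first two steps are essentially sound and agree with the argument the paper has in mind: by Lemma~\ref{lemma:PW1} the Jacobian of $\Psi=(\sigma_{1,\numpolys},\dots,\sigma_{\numpolys,\numpolys})$ factors as $V_\numpolys\cdot P_\gamma$, and B\'ezout bounds the number of \emph{isolated} points of a fibre by $k_1\cdots k_\numpolys$ --- note this bound holds for \emph{every} $\vof{M}$, not only generic $\vof{M}$, so the whole problem is the integer points lying on positive-dimensional components of exceptional fibres.

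That is exactly where your plan has a genuine gap. The inference ``$Z$ has at most $\numpolys-1$ parameters, hence $(\sigma_{1,\numpolys},\dots,\sigma_{\numpolys,\numpolys})|_Z$ factors through $\numpolys-1$ variables, hence \eqref{property:eliminant1} forces $Q(\vof{M})=0$'' is a non sequitur: \eqref{property:eliminant1} concerns the specific map $\vof{X}'\mapsto(\sigma_{1,\numpolys-1}(\vof{X}'),\dots,\sigma_{\numpolys,\numpolys-1}(\vof{X}'))$, i.e.\ it says $Q$ vanishes on those $\vof{M}$ that are sums of $\numpolys-1$ values of the $\phi_i$; a low-dimensional parameterisation of a fibre component does not produce such a representation (indeed each $\sigma_{i,\numpolys}$ is constant $=M_i$ on $Z$, so there is no nontrivial factorisation to pull \eqref{property:eliminant1} back through). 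Even granting, by semicontinuity of fibre dimension, that exceptional $\vof{M}$ lie on some proper subvariety, the entire content of the lemma is the uniform bound for \emph{every} $\vof{M}$, and your treatment of exceptional $\vof{M}$ (``reduce to fewer variables and induct, applying Lemma~\ref{lemma:PW2} at each step'') is not an argument: knowing $Q(\vof{M})=0$ does not convert your solutions in $\numpolys$ variables into solutions of a smaller system. The mechanism you are missing is the asymptotic definiteness of $P_\gamma$, which you invoke only to say $P_\gamma\not\equiv 0$: any non-isolated solution is a critical point of $\Psi$, hence satisfies $V_\numpolys(\vof{n})\,P_\gamma(\vof{n})=0$; if $P_\gamma(\vof{n})=0$ then $|\vof{n}|<\lambda$, giving $O_\gamma(1)$ such solutions uniformly in $\vof{M}$, while if $V_\numpolys(\vof{n})=0$ two coordinates coincide and one genuinely reduces to a system in fewer variables and inducts on $\numpolys$. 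This splitting into nonsingular solutions (B\'ezout) and singular solutions controlled through the factorisation $V_\numpolys\cdot P_\gamma$ and asymptotic definiteness is the Parsell--Wooley argument the paper cites; Lemma~\ref{lemma:PW2} plays no role in this lemma (in the paper it is used later, to construct the quotient polynomial $R$ for the third case of the paucity proof).
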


\noindent In short, this lemma is proved by breaking the putative solutions into singular solutions and nonsingular solutions. Bezout's Theorem guarantees that there are at most $k_1 \cdots k_\numpolys$ possible nonsingular solutions. Meanwhile, the singular solutions correspond to those $\vof{x}\in\Z^{\numpolys}$ such that  $\det(\phi_i'(x_j))_{i,j\in[\numpolys]} = 0$. Lemma~\ref{lemma:PW1} implies that there are finitely many since $P$ is asymptotically definite. 

For $s \in \N$ and $i\in[\numpolys]$, define the multivariate polynomials 
\[
\sigma_{i,s}(X_1,\dots,X_s) 
:= 
\sum_{j=1}^{s} \phi_i(X_j)
.\]
For each $i \in [\numpolys]$, we have the following factors 
\[
(X_i-Y) | P_{\gamma} \big( \sigma_{1,r}(\vof{X})-\phi_1(Y), \dots, \sigma_{r,r}(\vof{X})-\phi_{\numpolys}(Y) \big)
.\]
This is seen by taking $X_i=Y$ and applying Lemma~\ref{lemma:PW2}. 
Define $R(X_1,\dots,X_r;Y)$ as the quotient polynomial 
\[
P_{\gamma} \big( \sigma_{1,r}(\vof{X})-\phi_1(Y), \dots, \sigma_{r,r}(\vof{X})-\phi_{\numpolys}(Y) \big) 
= 
R(X_1,\dots,X_r;Y) \prod_{i=1}^{r} (X_i-Y) 
.\]

% \newpage 
% % 
\subsection{Proof of Theorem~\ref{theorem:paucity}}
% % 

% \begin{proof}
Recall that we wish to prove \eqref{estimate:paucity}. Fix $\myset$ an infinite subset of $\N$ and $X \in \N$. Also, fix $\vof{a} \in \Z^{\numpolys}\setminus\{\vof{0}\}$. 
The proof will use induction. 
There are two base cases for our induction. When $\myset=\N$, these base cases are considered and \eqref{estimate:paucity} is proven in \cite{DHV}. The general case of $\myset \subset \N$, an infinite subset, is similarly proved. I include the details for completeness. 

\emph{The base cases:} 
The first base case is the equation 
\[
\phi_1(m)-\phi_1(n) = a_1 \neq 0
\]
where $\phi_1$ has degree at least two. 
There exists a bivariate polynomial $\chi(X,Y)$, the first order differencing polynomial, such that 
\[
(X-Y)\chi(X,Y) = \phi_1(X)-\phi_1(Y)
.\] 
Thus, $m-n$ divides $a_1$; call this factor $d_1$ and call the remaining factor $d_0:=\chi(m,n)$ so that $d_0 d_1 = a_1$. Sine $a_1$ is non-zero, so must $d_0$ and $d_1$ be non-zero. By the Divisor Bound, there are at most $\mathcal{L}(c_{1,\phi_1},X)$ possibilities for $(d_0,d_1)$. The dependence on $\phi_1$ in $c$ stems from the fact that the image of $[X]$ under $\phi$ grows like $\{x \in \Z : |x| \lesssim X^{\deg{\phi_1}}\}$ rather than a range independent of $\phi_1$. 
Fixing any such possibility $(d_0,d_1)$ and substituting the linear equation into the first order difference polynomial, the Fundamental Theorem of Algebra implies that there are at most $\deg{\phi_1}$ possibilities for $(m,n)$. 

The second base case is the pair of equations 
\begin{align*}
\phi_1(m_1)+\phi_1(m_2)-\phi_1(n_1)-\phi_1(n_2) = a_1
\\
\phi_2(m_1)+\phi_2(m_2)-\phi_2(n_1)-\phi_2(n_2) = a_2
\end{align*}
where $(a_1,a_2) \neq (0,0)$ and $\deg(\phi_1)=1$. 
Let us reduce to the case where $\phi_1(X):=X$. 
If $\phi_1(X)=\alpha X+\beta$ for some $\alpha,\beta \in \Z$, then we may, without any loss of generality, replace $\phi_1(X)$ by $\alpha X$. I abuse notation and redefine $\phi_1(X):=\alpha X$. 
Similarly, we may pull out the factor of $\alpha$ to reduce the system of equations 
Assume first that 
\begin{align*}
m_1+m_2-n_1-n_2 = a_1/\alpha
\\
\phi_2(m_1)+\phi_2(m_2)-\phi_2(n_1)-\phi_2(n_2) = a_2
.\end{align*}
We may further suppose that $\alpha$ divides $a_1$; otherwise, there are no integral solutions to the linear equation and the bound \eqref{estimate:paucity} trivially holds. At this point, I abuse notation and replace $a_1/\alpha$ by $a_1$. 

Let us consider the diagonal solutions $\{m_1,m_2\}=\{n_1,n_2\}$. Suppose that $m_2=n_2$. There are $|\myset\cap[X]|$ possibilities for $(m_2,n_2)$ under this assumption. 
We have the system of equations 
\[
m_1-n_1=a_1 
\quad \text{and} \quad 
\phi_2(m_1)-\phi_2(n_1)=a_2
.\]
Substituting the linear equation into the non-linear one, we find that 
\[
\phi_2(n_1+a_1)-\phi_2(n_1)=a_2
.\]
the Fundamental Theorem of Algebra says that there are at most $\deg{\phi_2}$ possible choices for $n_1$ provided that $\phi_2(X+a_1)-\phi_2(X)-a_2$ is not the 0 polynomial. This can only happen if $a_1=0$ in which case $m_1=n_1$ and $m_2=n_2$. The latter implies that $a_2=0$, but we have a contradiction because $(a_1,a_2) \neq (0,0)$. Therefore, there are at most $\deg{\phi_2} |\myset\cap[X]|$ possibilities for $(m_1,m_2,n_1,n_2)$ when $m_2=n_2$. 
% For each fixed $m_1 \in \myset\cap[X]$, Bezout's theorem says that there are at most $\deg{\phi_1} \cdot \deg{\phi_2} = \deg{\phi_2}$ possibilities for $(m_1,n_1)$. Combined with the possibilities for $(m_2,n_2)$, we have at most $\deg{\phi_2}|\myset\cap[X]|$ total 
Similarly, there are at most $\deg{\phi_2} |\myset\cap[X]|$ possibilities when $m_1=n_2$, $m_1=n_1$ or $m_1=n_2$. 
In sum, there are at most $4\deg{\phi_2} |\myset\cap[X]|$ possible diagonal solutions. 
Observe that in the arithmetic method of refinements we could avoid this case. 

Now we assume that $\{m_1,m_2\} \neq \{n_1,n_2\}$. 
Using the linear equation, write $m_1=n_1+n_2-m_2+a_1$ and substitute this into the non-linear equation to find that 
\[
\phi_2(n_1+n_2-m_2+a_1)+\phi_2(m_2)-\phi_2(n_1)-\phi_2(n_2) 
= a_2
.\]
By the Binomial Theorem, there exists a polynomial $\rho(X,Y) \in \Z[X,Y]$ such that 
\[
\phi_2(X+Y) = \phi_2(X)+\rho(X,Y)
.\]
Therefore, 
\[
\phi_2(n_1+n_2-m_2)+\phi_2(m_2)-\phi_2(n_1)-\phi_2(n_2) 
= a_2-\rho(n_1+n_2-m_2,a_1) 
.\]
Using the second order differencing polynomial $\psi(X,Y,Z)$ defined by the relation 
\[
\phi_2(X+Y-Z)+\phi_2(Z)-\phi_2(X)-\phi_2(Y) 
= 
(X-Z)(Y-Z)\psi(X,Y,Z)
,\]
we have 
\[
(n_1-m_2)(n_2-m_2)\psi(n_1,n_2,m_2)
= a_2-\rho(n_1+n_2-m_2,a_1) 
.\]
We split our analysis into two cases as to whether $a_2-\rho(n_1+n_2-m_2,a_1)$ is zero or not. 

In the first case, assume that $a_2-\rho(n_1+n_2-m_2,a_1)$ is zero. 
There are at most $\deg{\rho}=\deg{\phi}$ possible roots to the polynomial equation $a_2-\rho(Y,a_1)=0$ since $a_1$ is fixed; call these roots $y_1,\dots,y_s$ where $s \leq \deg{\phi}$. Then $n_1+n_2-m_2=y_l$ for some $l\in[s]$. Since we are in the off-diagonal situation, we find that $\psi(n_1,n_2,m_2)=0$. Substituting in our new linear equation, we deduce that $\psi(n_1,n_2,n_1+n_2-y_l)=0$. 
Lemma~3.5 of \cite{HW} implies that there are at most  $(\deg{\psi}) |\myset\cap[X]|$ possible solutions, provided that $\psi(X,Y,X+Y-y_l)$ is not the zero-polynomial - this must be the case. Otherwise, if $\psi(X,Y,X+Y-y_l)$ is the zero-polynomial, then 
\[
\phi_2(y_l)+\phi_2(X+Y-y_l)-\phi_2(X)-\phi_2(Y) 
\equiv 0
\]
which is impossible since $\phi_2$ has degree at least two. 
Observe that in the arithmetic method of refinements we could avoid this case.

In the second case, assume that $a_2-\rho(n_1+n_2-m_2,a_1)$ is not zero. Fix the value $\nu:=n_1+n_2-m_1$. There are at most $|\myset\cap[X]|$ possibilities for $\nu$, since  $\nu=m_1-a_1$.\footnote{Without the observation that $\nu=m_1-a_1$, we would instead obtain estimates in terms of the cardinality of the sum-and-difference set $\myset+\myset-\myset$. This can be significantly larger than the cardinality of the set $\myset$.} 
With $a_1, a_2$ and $\nu$ fixed, we see that  $M:=a_2-\rho(\nu,a_1)$ is also fixed. There are at most $\mathcal{L}(c_{3,\gamma},X)$ possible triples $(d_0,d_1,d_2) \in \N^3$ such that $d_0 d_1 d_2 = M$. Hence, there are this many possibilities that 
\[
n_1-m_1=d_1, 
n_2-m_1=d_2 
\text{ and }
\psi(n_1,n_2,m_1)=d_0
.\]
Substituting the linear equations into $\psi$ and using the Fundamental Theorem of Algebra, we see that there are at most $\deg{\psi}=\deg{\phi_2}-2$ possibilities for $m_1$ from which $n_1$ and $n_2$ are uniquely determined since $d_1$ and $d_2$ are fixed. 
In sum, there are at most $(\deg{\phi_2}-2)\mathcal{L}(c_{3,\gamma},X)|\myset\cap[X]|$ possibilities for the second case. 

Combining the two cases, we see that we have at most \eqref{estimate:paucity} altogether. 
This completes the proof of the base cases. 

\emph{The inductive step:} 
Suppose that Theorem~\ref{theorem:paucity} is true for all $s<r$ where $r \geq 2$ and finite. 
Let $\gamma:=(\phi_1,\dots,\phi_{\numpolys})$ be a fixed separated system of $r$ polynomials. 
We break our solutions $\numreps^{\gamma}_{\numpolys}(\myset,[X],\vof{a})$ into three sets. 
The first set are those solutions such that $m_i=n_j$ for some pair $i,j\in[\numpolys]$; call these solutions $\numreps_{s}^{\gamma,1}(\myset,X,\vof{a})$. 
The second set of solutions are those for which $P_{\gamma}(\sigma_{1,r-1}(n_2,\dots,n_r)+a_1,\dots,\sigma_{r,r-1}(n_2,\dots,n_r)+a_r)=0$; call these solutions $\numreps_{s}^{\gamma,2}(\myset,X,\vof{a})$. The third set of solutions are those remaining; call these solutions $\numreps_{s}^{\gamma,3}(\myset,X,\vof{a})$. 
We treat these cases in turn. Each case satisfies the desired bound so that there sum does as well. 
Observe that the first two cases, $\numreps_{s}^{\gamma,1}(\myset,X,\vof{a})$ and $\numreps_{s}^{\gamma,2}(\myset,X,\vof{a})$ could be removed by the arithmetic method of refinements so that only the third case needs to be treated to establish the $\ell^p$-improving estimate of Theorem~\ref{theorem:improving}. 

{\emph{Case} $\numreps_{s}^{\gamma,1}(\myset,X,\vof{a})$:} 
Assume that there exists a pair $i,j\in[\numpolys]$ such that $m_i=n_j$. Then we are reduced to the case of the lemma where $s=r-1$ with the polynomials $(\phi_2,\phi_3,\dots,\phi_{\numpolys})$ and $(a_2,a_3,\dots,a_r)$ or the polynomials $(\phi_1,\phi_3,\dots,\phi_{\numpolys})$ and $(a_1,a_3,\dots,a_r)$. At least, one of $(a_1,a_3,\dots,a_r)$ or $(a_2,a_3,\dots,a_r)$ is not zero. Observe that the separated assumption implies that either $\phi_1$ or $\phi_2$ has degree at least two. 
Thus, the base cases apply, and by induction, there are at most $\lesssim \mathcal{L}(c_{\numpolys-1,\gamma},X) |\myset\cap[X]|^{\numpolys-2}$ possibilities which, when combined with the $|\myset\cap[X]|$ possibilities for $m_i=n_j$, is acceptable compared to \eqref{estimate:paucity}. 
% Therefore, we may assume that all solutions satisfy $m_i \neq n_j$ for all pairs $i,j\in[\numpolys]$. 

{\emph{Case} $\numreps_{s}^{\gamma,2}(\myset,X,\vof{a})$:} 
Define 
\begin{align*}
M_i 
= \sum_{j=1}^r \phi_i(m_j) 
= a_i + \sum_{j=1}^r \phi_i(n_j) 
\quad \text{for} \quad i\in[\numpolys]
.\end{align*}
By Lemma~3.5 of \cite{HW}, there are at most $(\deg{P})k_r|\myset\cap[X]|^{r-1}$ possible $\vof{M}$ such that $P(\vof{M})=0$ since the degree of $P(\sigma_{1,r}(\vof{X}),\dots,\sigma_{r,r}(\vof{X}))$ is at most $\deg{P} \cdot \max_{i\in[\numpolys]}\{\deg{\phi_i}\}$. 
Lemma~\ref{lemma:PW3} says that there are at most $\lesssim 1$ possible $\vof{m} \in \N^\numpolys$. Applying Lemma~\ref{lemma:PW3} again, but this time to 
\begin{align*}
M_i-a_i
=\sum_{j=1}^r \phi_i(n_j) 
\quad \text{for} \quad i\in[\numpolys]
,\end{align*}
we deduce that there are $\lesssim 1$ possible $\vof{n} \in \N^\numpolys$. Therefore, there are $\lesssim 1$ pairs $(\vof{m},\vof{n}) \in \N^{\numpolys} \times \N^{\numpolys}$ for each $\vof{M}$. And in sum, there are at most $\lesssim_{\gamma,\numpolys} |\myset\cap[X]|^{r-1}$ possible $(\vof{m},\vof{n})$ in this case. 

{\emph{Case} $\numreps_{s}^{\gamma,3}(\myset,X,\vof{a})$:} 
Fix $n_2,\dots,n_r \in \myset$. This fixes $M_i:=a_i+\phi_i(n_2)+\cdots+\phi_i(n_r)$ for all $i\in[\numpolys]$. In turn, $P(M_1,\dots,M_r)$ is fixed. 
In this case we have $P(\vof{M}) \neq 0$ which implies $1 \leq |P(\vof{M})| \lesssim_{\gamma} X^{\deg{P}}$ where the implicit constant depends on the coefficients of the polynomials. There are at most $\mathcal{L}(c_{\numpolys+1,\gamma},X)$, for some positive $c_{\numpolys+1,\gamma}$, possible $\numpolys+1$-tuples $(d_0,d_1,\dots,d_\numpolys)\in \N^{\numpolys+1}$ such that $d_0 d_1 \cdots d_\numpolys = P(\vof{M})$. Set $d_i:=m_i-n_1$ for $i \in [\numpolys]$ and $d_0:=P(\vof{M})/(d_1\cdots d_{\numpolys})$. Then $m_i=d_i+n_1$. Inserting these linear equations into the quotient polynomial $R(X_1,\dots,X_\numpolys;Y)$, we find that $R(d_1+n_1,\dots,d_\numpolys+n_1;n_1)=d_0$ is a univariate polynomial in $n_1$. Therefore, there are at most $\deg{R}$ possibilities for $n_1$. Each possibility for $n_1$ uniquely determines $m_i$ since $d_i$ is fixed. In sum, this gives at most $(\deg{R}) \, \mathcal{L}(c_{\numpolys+1,\gamma},X) |\myset\cap[X]|^{\numpolys-1}$ possibilities. 
% \end{proof}

% % 
\subsection{Remark: Paucity for $\numreps^{\gamma}_{\numpolys}(\myset,[X],\vof{0})$}%\label{section:DR}
% % 

Lemma~\ref{lemma:PW3} implies that 
$\numreps^{\gamma}_{\numpolys}(\myset,[X],\vof{0}) \lesssim |\myset\cap[X]|^{\numpolys}$ as $X$ tends to infinity. 
This bound implies an $O(1)$-bound at a subcritical index for the related $\ell^{2,1}(\Z) \to L^{2\numpolys,\infty}(\T^\numpolys)$ discrete restricted inequality. This is the first such result observed for degenerate curves of finite type. 
I point out that the above argument allows us to prove the following paucity result for $\numreps^{\gamma}_{\numpolys}(\myset,[X],\vof{0})$. 

Define the diagonal solutions $\mathcal{D}_{s}^{\gamma}(\myset,[X])$ in $(\vof{m};\vof{n}) \in \numreps_{s}^{\gamma}(\myset,[X],\vof{0})$ as those such that 
\[
\{ \phi_i(m_1),\dots,\phi_i(m_s) \} \neq  \{ \phi_i(n_1),\dots,\phi_i(n_s) \} 
\quad \for \quad 1 \leq i \leq \numpolys
.\]
{This condition is stronger than assuming that $(m_1,\dots,m_s)$ is {not} a permutation of $(n_1,\dots,n_s)$ since a polynomial may have multiple solutions to a specific value.} 
For each $s\in\N$, there are between $|\myset \cap [X]|^s$ and $C_\gamma |\myset \cap [X]|^s$ diagonal solutions for some positive constant $C_\gamma$. 
% For $\vof{a} \in \Z^s\setminus\{0\}$, there are no diagonal solutions. 
\begin{theorem}
Suppose that $\myset$ is an infinite subset of $\N$. Then as $X$ tends to infinity, we have the bound 
\begin{equation}
|\numreps_{\numpolys}^{\gamma}(\myset,[X],\vof{0}) \setminus \mathcal{D}_{s}^{\gamma}(\myset,[X])|
\lesssim_\epsilon 
N^\epsilon |\myset\cap[X]|^{r-1}
.\end{equation}
\end{theorem}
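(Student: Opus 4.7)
The strategy mirrors the inductive proof of Theorem~\ref{theorem:paucity}, adapted to the setting $\vof{a}=\vof{0}$ after the diagonal $\mathcal{D}_{\numpolys}^{\gamma}$ has been pruned. I induct on $\numpolys$, the number of polynomials in $\gamma$. The bases $\numpolys=1,2$ are handled directly: when $\numpolys=1$, non-diagonality is incompatible with $\phi_1(m)=\phi_1(n)$ and the set is empty; when $\numpolys=2$, the second base case in the proof of Theorem~\ref{theorem:paucity} transfers essentially verbatim, once the ``trivial'' solutions excluded there by the assumption $\vof{a}\neq\vof{0}$ are reinstated as diagonal contributions.

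For the inductive step, fix $\numpolys>2$ and partition $\numreps_{\numpolys}^{\gamma}(\myset,[X],\vof{0})\setminus\mathcal{D}_{\numpolys}^{\gamma}(\myset,[X])$ into a \emph{coincidence} part, where some $m_i=n_j$, and a \emph{no-coincidence} part, where $m_i\neq n_j$ for every $i,j\in[\numpolys]$. In the coincidence case, cancelling a matched pair $(m_{i^{\star}},n_{j^{\star}})$ leaves a reduced tuple $(\vof{m}',\vof{n}')\in(\myset\cap[X])^{\numpolys-1}\times(\myset\cap[X])^{\numpolys-1}$ still satisfying all $\numpolys$ polynomial equations with right-hand side $\vof{0}$. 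Non-diagonality of the original pair forces some $\phi_{i^{\circ}}\in\gamma$ to satisfy $\{\phi_{i^{\circ}}(m_j)\}\neq\{\phi_{i^{\circ}}(n_j)\}$ as multisets; cancelling the pair removes the common value $\phi_{i^{\circ}}(m_{i^{\star}})=\phi_{i^{\circ}}(n_{j^{\star}})$ from both multisets and preserves the disagreement. Choosing any separated subsystem $\gamma'\subset\gamma$ of $\numpolys-1$ polynomials containing $\phi_{i^{\circ}}$, the reduced pair lies in $\numreps_{\numpolys-1}^{\gamma'}(\myset,[X],\vof{0})\setminus\mathcal{D}_{\numpolys-1}^{\gamma'}(\myset,[X])$. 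The inductive hypothesis then supplies the bound $\lesssim_{\epsilon}X^{\epsilon}|\myset\cap[X]|^{\numpolys-2}$, and summing over the at most $\numpolys^{2}|\myset\cap[X]|$ choices of matched pair yields $\lesssim_{\epsilon}X^{\epsilon}|\myset\cap[X]|^{\numpolys-1}$.

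In the no-coincidence case I subdivide further according to whether $P_{\gamma}(\sigma_{1,\numpolys-1}(n_2,\dots,n_\numpolys),\dots,\sigma_{\numpolys,\numpolys-1}(n_2,\dots,n_\numpolys))$ vanishes. If it vanishes, the asymptotic definiteness of $P_{\gamma}$ from Lemma~\ref{lemma:PW1}, combined with positivity of the leading term of $\phi_{\numpolys}$ on $\N$, confines $(n_2,\dots,n_\numpolys)$ to a bounded set of $O_{\gamma}(1)$ tuples; bounding $n_1$ trivially by $|\myset\cap[X]|$ and $\vof{m}$ by $O_{\gamma}(1)$ via Lemma~\ref{lemma:PW3} produces the negligible contribution $O_{\gamma}(|\myset\cap[X]|)$. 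If it does not vanish, I replay Case~3 of the proof of Theorem~\ref{theorem:paucity} verbatim: fix $(n_2,\dots,n_\numpolys)$ (at most $|\myset\cap[X]|^{\numpolys-1}$ choices) and exploit the factorization
\[
R(\vof{m};n_1)\prod_{i=1}^{\numpolys}(m_i-n_1) = P_{\gamma}\!\big(\sigma_{1,\numpolys-1}(n_2,\dots,n_\numpolys),\dots,\sigma_{\numpolys,\numpolys-1}(n_2,\dots,n_\numpolys)\big),
\]
whose right-hand side is a nonzero integer of size $\lesssim X^{\deg P_{\gamma}\cdot k_{\numpolys}}$. The divisor bound supplies $\lesssim X^{\epsilon}$ factorizations $d_0 d_1\cdots d_\numpolys$ of the right-hand side; each fixes $n_1$ up to the roots of a univariate polynomial of degree $\deg_Y R$, after which $\vof{m}$ is determined linearly from the $d_i$. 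This subcase contributes $\lesssim_{\epsilon}X^{\epsilon}|\myset\cap[X]|^{\numpolys-1}$.

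The principal obstacle is the coincidence case, where I must verify that the reduced pair remains non-diagonal with respect to a suitable $(\numpolys-1)$-subsystem and that the pair-matching combinatorics introduces only a constant overcount depending on $\numpolys$. A secondary subtlety arises in the non-vanishing subcase of the no-coincidence case, where one must ensure the univariate equation constraining $n_1$ after fixing the divisor decomposition is not identically satisfied; this follows from the non-degeneracy of $R$ just as in the original proof of Theorem~\ref{theorem:paucity}.
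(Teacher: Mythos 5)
Your overall architecture (induct on $\numpolys$; split into a coincidence case $m_i=n_j$, a vanishing case for $P_\gamma$, and an elimination/divisor case) is exactly the adaptation of the proof of Theorem~\ref{theorem:paucity} that the paper has in mind, and the coincidence step is handled correctly: cancelling an equal pair preserves the zero right-hand side and, since the cancelled values $\phi_{i^\circ}(m_{i^\star})=\phi_{i^\circ}(n_{j^\star})$ are removed from both multisets, preserves non-diagonality with respect to a subsystem $\gamma'$ containing $\phi_{i^\circ}$. The no-coincidence analysis also goes through as in Case~3 of the paper (with the same implicit caveat, which you flag, about the univariate equation in $n_1$ not degenerating). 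A minor quibble: asymptotic definiteness of $P_\gamma$ bounds the vector $\big(\sigma_{i,\numpolys-1}(n_2,\dots,n_\numpolys)\big)_i$, not $(n_2,\dots,n_\numpolys)$ itself; you need one more step (e.g.\ Lemma~\ref{lemma:PW3} applied to the subsystem $(\phi_1,\dots,\phi_{\numpolys-1})$, or the observation that for a fixed polynomial the summands are eventually of one sign on $\N$) to conclude there are $O_\gamma(1)$ such tuples, and "positivity of the leading term of $\phi_\numpolys$" is neither given nor needed.

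The genuine gap is your base case $\numpolys=2$. You assert that the second base case in the proof of Theorem~\ref{theorem:paucity} "transfers essentially verbatim," but that argument is built around the quantity $a_2-\rho(n_1+n_2-m_2,a_1)$, and when $\vof{a}=\vof{0}$ this quantity is \emph{identically} zero (since $\rho(\cdot,0)\equiv 0$). Consequently the divisor-bound subcase never occurs, and the other subcase collapses as well: there is no longer a finite root set $y_1,\dots,y_s$ of $a_2-\rho(Y,a_1)$ to pivot on. What remains, for $\gamma=(\phi_1,\phi_2)$ with $\phi_1$ linear, is precisely the set of $\myset$-points on the surface $\psi(n_1,n_2,m_2)=0$ with $n_1+n_2-m_2\in\myset$ (non-diagonality only rules out the factors $n_i-m_2$, not $\psi$), and the only tool quoted in the paper for such a count, Lemma~3.5 of \cite{HW}, gives $\lesssim|\myset\cap[X]|^{2}$ rather than the required $X^\epsilon|\myset\cap[X]|$. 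This case cannot be dodged: the theorem itself includes pairs $\gamma=(T,\phi_2)$, and your coincidence step can also funnel a larger system down to a subsystem containing the linear polynomial. Separately, your inductive step is stated only for $\numpolys>2$, so you also need the $\numpolys=2$ base for pairs with both degrees at least two (e.g.\ $(X^2,X^3)$), which the paper's base case (assuming $\deg\phi_1=1$) does not cover; that sub-issue is repairable by running your elimination argument already at $\numpolys=2$, but the zero right-hand side, linear-$\phi_1$ pair needs a genuinely new argument (or an explicit restriction of the theorem excluding it), and as written your proposal does not supply one.
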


\end{document}